\newcommand{\LC}{\left(}
\newcommand{\RC}{\right)}
\theoremstyle{plain}
\newtheorem{thm}{Theorem}[section]
\newtheorem{prop}{Proposition}[section]
\newtheorem{lemma}[prop]{Lemma}
\newtheorem{rmk}[prop]{Remark}
\numberwithin{equation}{section}
\newcommand {\R} {\mathbb{R}} 
 \newcommand {\N} {\mathbb{N}}
\newcommand {\p} {\partial}
\newcommand{\eps}{\epsilon}
\newcommand{\vareps}{\varepsilon}
\newcommand{\wt}{\widetilde}
\newcommand{\abs}[1]{\lvert #1 \rvert}          
\newcommand{\norm}[1]{\lVert #1 \rVert}         
\DeclareMathOperator{\F} {\mathcal{F}}
\definecolor{skyblue}{rgb}{0.85,0.85,1}
\author[]{}
\address{}
\curraddr{}
\email{}
\author[Lai]{Ru-Yu Lai}
\address{School of Mathematics, University of Minnesota, Minneapolis, MN 55455, USA}
\curraddr{}
\email{rylai@umn.edu}
\author[Ohm]{Laurel Ohm}
\address{Courant Institute, New York University, New York, NY 10012, USA}
\curraddr{}
\email{laurel.ohm@nyu.edu}
\thanks{\textbf{Key words}: Inverse problems, fractional Laplacian, nonlinear perturbations}
\title[Inverse problems for the fractional Laplace equation]{Inverse problems for the fractional Laplace equation with lower order nonlinear perturbations}
\begin{document}

\maketitle
\begin{abstract}
We study the inverse problem for the fractional Laplace equation with multiple nonlinear lower order terms. We show that the direct problem is well-posed and the inverse problem is uniquely solvable. More specifically, the unknown nonlinearities can be uniquely determined from exterior measurements under suitable settings.

	\medskip


\end{abstract}


\section{Introduction}
We study the inverse problem for the fractional Laplace equation with lower order nonlinear perturbations. The problem setup is as follows.
For $0<t<s<1$, let $\Omega \subset \R^n, n\geq 1$ be a bounded domain with smooth boundary $\p \Omega$, and  $\Omega_e:=\R^n\setminus \overline{\Omega}$ be the exterior domain of $\Omega$. 
We consider the following fractional elliptic equation:
\begin{align}\label{intro_eqn_MS}
\begin{cases}
(-\Delta)^s u+ q(x,u,\nabla^t u) + a(x,u) =0 & \hbox{ in } \Omega,\\
u=f  &  \hbox{ in } \Omega_e,\\
\end{cases}
\end{align}
where $a(x,u)$ is an unknown potential and the gradient term $q$ takes the form
\begin{equation}\label{defintion q}
q(x,u,\nabla^t u) := b(x)\int_{\R^n}\nabla^tu(x,y)\cdot\nabla^tu(x,y)\,dy + u^m(x)\int_{\R^n} d(x,y)\cdot\nabla^t u(x,y)\,dy 
\end{equation}
for integer $m\ge2$. Here the unknown scalar function $b(x)$ and vector-valued function $d(x,y)$, together with $a(x,u)$, are to be determined from the exterior measurement.

In \eqref{intro_eqn_MS}, the fractional Laplacian for $0<s<1$ is defined by 
\begin{align}\label{fractional Laplacian}
(-\Delta)^{s}u(x) := c_{n,s}\mathrm{P.V.}\int_{\mathbb{R}^{n}}\dfrac{u(x)-u(y)}{|x-y|^{n+2s}}dy,
\end{align}
for $u\in H^s(\mathbb R^n)$, where the symbol P.V. denotes the principal value and  
\begin{equation*}
c_{n,s}=\frac{\Gamma(\frac{n}{2}+s)}{|\Gamma(-s)|}\frac{4^{s}}{\pi^{n/2}} 
\end{equation*}
is a constant; see \cite{di2012hitchhiks} for the explicit expression. The space $H^s(\R^n)$ is the standard fractional Sobolev space; see also Section \ref{Sec 2}.
For $u\in H^s(\mathbb R^n)$, since $H^s(\R^n)\subset H^t(\R^n)$ for $0<t<s<1$, $u$ is also in $H^t(\R^n)$.  Then the \textit{fractional gradient} of $u$ at points $x$ and $y$ is defined by  
$$
    \nabla^t u(x,y):= {c^{1/2}_{n,t}\over \sqrt{2}} {y-x\over|x-y|^{n/2+t+1}}(u(x)-u(y)),
$$
and the linear operator $\nabla^t$ maps $H^t(\R^n)$ to $L^2(\R^{2n})$ \cite{Covi2020}. Further discussion of notation will appear in Section~\ref{Sec 2}.

For the coefficients $b(x)$ and $d(x,y)$, we assume that 
$b=b(x):\Omega \to \R$ and $d=d(x,y):\Omega\times\R^n\to \R^n$ satisfy
\begin{align}\label{condition bd}
b\in C(\Omega)\quad\hbox{ and }\quad d\in  C(\Omega\times\R^n)\, \hbox{ with compact support in $\Omega\times\Omega$.} 
\end{align} 
Furthermore, we assume that the coefficient $a=a(x,z):\Omega\times \R \to \R$ satisfies the following conditions:
\begin{align}\label{condition a}
\begin{cases}
\p^k_z a(x,0)=0\qquad \hbox{for all }x\in\overline\Omega, \; 0\le k\le  m \\
\hbox{the map $z\mapsto a(\cdot,z)$ is holomorphic with values in $C^s(\overline\Omega)$},
\end{cases}
\end{align}
where $C^s(\overline\Omega)$ denotes the usual H\"older space; see also Section \ref{Sec 2}.
Then the function $a$ can be expanded into the following power series:
\begin{align}\label{Taylor series a}
a(x,z) = \displaystyle\sum^\infty_{k=m+1} a_k(x){z^k\over k!}, \qquad  a_k(x):=\p^k_z a(x,0)\in C^s(\overline\Omega),
\end{align}
which converges in $C^s(\Omega\times\R)$ space.

The exterior measurement is encoded in the \emph{Dirichlet-to-Neumann} (DN) map:
\[
\Lambda: \widetilde{H}^s(\Omega_e)\to \big(\widetilde{H}^{s}(\Omega_e)\big)^\ast, \qquad \Lambda(f)=\left.(-\Delta)^s u\right|_{\Omega_e}, 
\]
where $u$ is the solution to \eqref{intro_eqn_MS} with exterior data $f$ and $\big(\widetilde{H}^{s}( \Omega_e)\big)^\ast$ represents the dual space of $\widetilde{H}^{s}(\Omega_e)$. 
For small data $f\in C^\infty_c(\Omega_e)$, we show in Section~\ref{Sec 2} that the problem \eqref{intro_eqn_MS} is well-posed and, therefore, we can define the DN map through the integral \eqref{DN integral} corresponding to the equation \eqref{intro_eqn_MS} and it is indeed well-defined. 

A fractional version of the well-known Calder\'on problem \cite{calderon, U2009} was first investigated in \cite{ghosh2016calder}, in which the authors studied the inverse problem for the linear fractional Schr\"odinger equation (with $q=0$ and $a(x,u)=a(x)u$ in \eqref{intro_eqn_MS}). Specifically, in \cite{ghosh2016calder} the potential $a(x)$ is uniquely determined from the associated DN map. The essential idea in obtaining this uniqueness result is to establish the strong uniqueness property of the fractional Laplacian $(-\Delta)^s$ (see Proposition~\ref{Prop:strong uniqueness}) and the associated Runge approximation property.
Since then, there have been many works concerning related inverse problems in various settings, including the problem with a single measurement \cite{GRSU18, Ruland2020single}, unique determination for the (anisotropic) fractional Laplacian and conductivity equation \cite{cekic2020calderon, covi2018, GLX}, stability estimates \cite{ruland2017fractional}, the inverse obstacle problem \cite{CLL2017simultaneously}, monotonicity inversion \cite{harrach2017nonlocal-monotonicity,harrach2020monotonicity}, nonlinear equations \cite{lai2019global, LaiL2020, Li2020_05, Lin202004}, fractional parabolic equations \cite{LLR2019calder}, fractional magnetic equations \cite{Covi2020, Li2019, Li2020magnetic}, higher order operators 	\cite{CMR2020, CMRU}, as well as equations with lower order nonlocal perturbations \cite{BGU18}.

\subsection{Main result}\label{subsec:uniqueness}
The main objective of this paper is to study the simultaneous reconstruction of three nonlinearities in a fractional equation. Due to the nonlocal property, this is by nature a partial data inverse problem. 
The main result of the paper is stated below.

\begin{thm}\label{Main Thm 1}
	Let $0<t<s<1$ and let $\Omega \subset \R^n$, $n\geq 1$ be a bounded domain with smooth boundary. Let $W_1,W_2$ be two arbitrary open sets in $\Omega_e$. Suppose that $b_j(x),\, d_j(x,y)$, and $a_j(x,z)$ each satisfy the conditions \eqref{condition bd} and \eqref{condition a} for $j=1,2$. 
    Suppose furthermore that 
	$$
	(d_1-d_2)(x,y)|x-y |^{-n/2-t}\in  	L^2(\Omega)\qquad \hbox{ for any fixed } x\in\Omega.
	$$ 
	Let $\Lambda_{j}(f)$ be the DN map corresponding to \eqref{intro_eqn_MS} with $a,b,d$ replaced by $a_j,b_j,d_j$, respectively, for $j=1,2$. Suppose that
	\begin{align}\label{DN map in Thm 1}
	  \left.	\Lambda_{1}(f) \right|_{W_2} = 	  \left.	\Lambda_{2}(f) \right|_{W_2} \qquad \text{ for any }f\in C^\infty_c(W_1)
	\end{align} 
	with $\norm{f}_{C^\infty_c(W_1)}<\varepsilon$, where $\varepsilon>0$ is sufficiently small.
	Then 
	$$b_1(x)=b_2(x)\qquad \hbox{ in } \Omega,$$
    $$d_1(x,y)\cdot(x-y)=d_2(x,y)\cdot(x-y)\qquad \hbox{ in } \Omega\times \Omega, $$ 
	and $$\quad a_1(x,z)=a_2(x,z) \qquad\hbox{ in } \Omega \times \R.$$
	\end{thm}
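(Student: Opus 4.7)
The plan is to use higher-order linearization of the DN map, exploiting the analyticity and vanishing-to-order-$m{+}1$ hypothesis in \eqref{condition a}. For exterior data $f = \sum_{j=1}^{N}\epsilon_j f_j$ with $f_j\in C^\infty_c(W_1)$ and $\epsilon$ small, the well-posedness from Section~\ref{Sec 2} yields a convergent power-series expansion of $u^\epsilon$, whose Taylor coefficients each solve a linear $(-\Delta)^s$ equation with source terms built from $b,d,a_k$ and lower-order coefficients. Differentiating the hypothesis \eqref{DN map in Thm 1} in $\epsilon$ and pairing with an auxiliary test solution having data in $W_2$ via the $(-\Delta)^s$ bilinear duality produces, at each differentiation order, an integral identity on $\Omega$ coupling $b_1-b_2$, $d_1-d_2$, and the $a_{k,1}-a_{k,2}$ to multilinear combinations of linear fractional-harmonic functions. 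The plan is to decode these identities using the Runge approximation property of $(-\Delta)^s$ (solutions with data in $W_i$ are dense in $L^2(\Omega)$ in a suitable topology) together with the strong uniqueness of $(-\Delta)^s$.

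\textbf{Step 1: second-order linearization to extract $b$.} With $f = \epsilon_1 f_1+\epsilon_2 f_2$, the condition $m\ge 2$ makes the only $O(\epsilon_1\epsilon_2)$ source the $b$-quadratic; the first-order coefficients $v_j$ are common to both systems. Testing the difference against a fractional-harmonic $v_3$ with data in $W_2$ yields $\int_\Omega (b_1-b_2)(x)\bigl(\int_{\R^n}\nabla^t v_1\cdot\nabla^t v_2\,dy\bigr)v_3(x)\,dx = 0$. Runge density of $v_3$ in $L^2(\Omega)$ reduces this to a pointwise identity, and choosing $v_1=v_2$ nonconstant (so $\int|\nabla^t v_1|^2\,dy>0$) forces $b_1\equiv b_2$.

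\textbf{Step 2: $(m{+}1)$-th-order linearization; this is the main obstacle.} With $b_1=b_2$ in hand, the mixed $(m{+}1)$-derivative of the difference, tested against $v_{m+2}$ with data in $W_2$, produces
\begin{align*}
\int_\Omega\Bigl[(a_{m+1,1}-a_{m+1,2})(x)\prod_{j=1}^{m+1}v_j(x) + m!\sum_{k=1}^{m+1}\Bigl(\prod_{j\ne k}v_j(x)\Bigr)\int(d_1-d_2)(x,y)\cdot\nabla^t v_k(x,y)\,dy\Bigr]v_{m+2}(x)\,dx = 0.
\end{align*}
The hard part is disentangling $a_{m+1}$ from $d$: the former enters as a pure pointwise product, the latter through a nonlocal fractional-gradient integral. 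My plan is to apply Runge approximation in a sufficiently strong topology to let $v_1=\cdots=v_m\to 1$ on $\Omega$, so that $\nabla^t v_j$ vanishes on $\Omega\times\Omega$ (exactly where $d$ is supported) in the limit. A second Runge step on $v_{m+2}$ then reduces the identity to
\begin{align*}
A(x)\varphi(x) + m!\int_\Omega D(x,y)\bigl(\varphi(x)-\varphi(y)\bigr)\,dy = 0
\end{align*}
for $\varphi = v_{m+1}|_\Omega$ ranging over a dense subset of $L^2(\Omega)$, where $A = a_{m+1,1}-a_{m+1,2}$ and $D(x,y)=\tfrac{c_{n,t}^{1/2}}{\sqrt{2}}(d_1-d_2)(x,y)\cdot(y-x)\,|x-y|^{-n/2-t-1}$. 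The $L^2$-integrability hypothesis on $(d_1-d_2)|x-y|^{-n/2-t}$ is precisely what ensures $D(x,\cdot)\in L^2(\Omega)$, so the integral is well-defined. Taking $\varphi$ supported in a small ball $B(x_0,r)\subset\Omega$ and evaluating at $x\notin B(x_0,r)$ kills the local term $A(x)\varphi(x)$ and leaves $\int D(x,y)\varphi(y)\,dy=0$; varying $\varphi$, $x_0$, and $r$ forces $D\equiv 0$ off the diagonal, i.e.\ $(d_1-d_2)(x,y)\cdot(x-y) \equiv 0$ on $\Omega\times\Omega$. Feeding this back into the identity yields $A\equiv 0$, i.e.\ $a_{m+1,1}=a_{m+1,2}$.

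\textbf{Step 3: induction to extract the remaining $a_k$.} Assume inductively $a_{j,1}=a_{j,2}$ for $m+1\le j\le K-1$. Then every source term in the $K$-th Taylor coefficient of $u^{(1)}-u^{(2)}$ built from already-matched coefficients cancels, and only the multilinear piece $(a_{K,1}-a_{K,2})\prod_{j=1}^{K}v_j$ survives. Testing against $v_{K+1}$ with data in $W_2$ and applying Runge approximation twice — first to dispose of $v_{K+1}$, then to force the weight $\prod_{j=1}^{K}v_j$ to be generic on $\Omega$ — yields $a_{K,1}(x)=a_{K,2}(x)$ in $\Omega$; summing the Taylor expansion \eqref{Taylor series a} gives $a_1\equiv a_2$ on $\Omega\times\R$. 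The genuine technical obstacle of the whole argument is the disentanglement step inside Step 2; everything else is a routine application of the Runge/strong-uniqueness machinery for $(-\Delta)^s$ together with the algebraic bookkeeping of the higher-order linearization.
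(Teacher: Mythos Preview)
Your proposal is correct and shares the paper's overall architecture (higher-order linearization, Runge approximation, recovery of $b$, then $d$ and $a_{m+1}$, then the remaining $a_k$), but differs from the paper's argument in several concrete places.

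First, the paper does \emph{not} use the bilinear testing with an auxiliary $v_{m+2}$ supported in $W_2$; instead it invokes the strong unique continuation property directly (Proposition~\ref{Prop:strong uniqueness}) to conclude $u_1^{(k)}=u_2^{(k)}$ in $\R^n$ for every $k$, and then simply subtracts the two interior equations pointwise. Your Alessandrini-type identity is an equivalent route. Second, the paper uses single-parameter linearization ($u=\eps f$), so the same $u^{(1)}$ appears in every slot, whereas you polarize with independent $\eps_j f_j$; this gives you the extra flexibility you exploit in Step~2.

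The most substantive difference is how $d$ and $a_{m+1}$ are disentangled at order $m{+}1$. The paper first sends $u^{(1)}\to 1$ via Runge (which kills all $\nabla^t$ contributions simultaneously) to read off $a_{m+1,1}=a_{m+1,2}$, and \emph{then} returns to the remaining identity $(u^{(1)})^m\int (d_1-d_2)\cdot\nabla^t u^{(1)}\,dy=0$ and eliminates $d$ by a sign argument: for each fixed $x_0$ one builds an explicit $\varphi_{x_0}\in L^2(\Omega)$ equal to $1$ at $x_0$, strictly ${<}1$ on $\{(d_1-d_2)(x_0,\cdot)\cdot(\cdot-x_0)>0\}$ and strictly ${>}1$ on the complementary set, approximates it by Runge, and derives a contradiction from strict positivity of the integrand. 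Your approach instead exploits the local/nonlocal dichotomy directly: after reducing to $A(x)\varphi(x)+m!\int_\Omega D(x,y)(\varphi(x)-\varphi(y))\,dy=0$, you choose $\varphi$ supported in a small ball and evaluate at $x$ outside it, which annihilates the pointwise term $A\varphi$ and isolates $\int D(x,y)\varphi(y)\,dy$. This localization argument is arguably cleaner than the paper's sign construction and recovers $D$ first, $A$ second---the reverse of the paper's order. Both routes use the hypothesis $(d_1-d_2)(x,\cdot)|x-\cdot|^{-n/2-t}\in L^2(\Omega)$ in exactly the same way, namely to guarantee that $D(x,\cdot)\in L^2(\Omega)$ so that the Runge $L^2$-limits pass through the $y$-integral.

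Finally, in Step~3 the paper chooses $f>0$ and invokes the maximum principle (Proposition~\ref{Prop: strong max principle}) to force $u^{(1)}>0$, whereas you again use Runge to make the weight $\prod v_j$ nonvanishing; both are valid. One point you leave implicit and should state: between Step~1 and Step~2 you need that all mixed derivatives of order $\le m$ of $u_1$ and $u_2$ agree, so that the $b$-terms at order $m{+}1$ cancel upon subtraction. This follows inductively from $b_1=b_2$ since $\psi$ and $a$ contribute only at orders $\ge m{+}1$, but it should be said.
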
 
 
\begin{rmk}
	We can fully recover the coefficient $d$ only if $d$ is of the form
	\[ d(x,y) = d_0(x,y)(x-y) \]
	for some scalar-valued function $d_0$ where $d_0(x,x)$ is known. This is due to the natural gauge enjoyed by equation \eqref{intro_eqn_MS}; see \cite{Covi2020}. In particular, if $u$ satisfies \eqref{intro_eqn_MS} with $d=\underline d(x,y)$, then $u$ also satisfies \eqref{intro_eqn_MS} for $d=\underline d(x,y)+d_\perp(x,y)$ for any $d_\perp$ satisfying $d_\perp\cdot(x-y)=0$. See also \cite{Covi2020}. 
\end{rmk}

The linearization scheme \cite{Isakov93} is a promising method for the study of inverse problem for local and nonlocal nonlinear elliptic equations. By performing a first order linearization of the DN map, one can reduce the inverse problem under study to the inverse problem for a linear equation. Then one can apply the available results for this linear case to recover the unknowns. The higher order linearization technique, in particular, uses nonlinearity as a tool in solving inverse problems for nonlinear equations. It involves introducing small parameters into the data, and then differentiating the nonlinear equation with respect to these parameters multiple times to obtain simpler linearized equations. 
Note that the application of this higher order linearization technique in treating local or nonlocal elliptic equations with power-type nonlinearities has been exploited in \cite{FO2019, KU201909, KU201905, LaiL2020, LaiZhou2020, LLLS201903, LLLS201905, Li2020_05, Lin202004}.

The inverse boundary value problem (IBVP) for nonlocal elliptic equations with nonlinearities was investigated in \cite{lai2019global, LaiL2020, Lin202004} for $(-\Delta)^s u+a(x,u)=0$. In particular, when $b=0,\,d=0$ in \eqref{intro_eqn_MS}, $a(x,u)$ is uniquely determined from the exterior measurement in \cite{lai2019global} based on first order linearization. The necessary condition $W_1=W_2$ in \cite{lai2019global} was removed later in \cite{LaiL2020}, which also showed the well-posedness of the equation using higher order linearization.
Moreover, in \cite{Li2020_05}, the problem for the nonlinear fractional magnetic equation was studied by applying first order linearization. 

We shall next discuss the IBVP for local nonlinear elliptic equations. This problem has been extensively studied in the literature. For instance, $-\Delta u+a(x,u)=0$ was studied in \cite{victorN, isakov1994global, sun2010inverse} for the full data problem and \cite{KU201905, LLLS201905} for the partial data setting when $n\geq 2$. The quasilinear equation $-\Delta u+a(u,\nabla u)=0$ was studied in \cite{victor01} when $n=3$ and $-\Delta u+a(x,\nabla u)=0$ was investigated in \cite{sun2004inverse} when $n=2$. 
It was however noted in \cite{sun2004inverse} that the uniqueness of recovery of more general nonlinearity $a(x,u,\nabla u)$ in $-\Delta u+a(x,u, \nabla u)=0$ in general fails.
We refer the interested reader to \cite{CNV19, Sun2002, Isakov93, Kang2002, Sun96, SunUhlmann97} for related results.

In this paper, we apply the higher order linearization technique to prove the well-posedness of \eqref{intro_eqn_MS} and reconstruct the unknown coefficients when the data is sufficiently small ($\norm{f}_{C^\infty_c(W_1)}<\varepsilon$ for some $\varepsilon>0$). 
More specifically, in our setting, differentiating \eqref{intro_eqn_MS} w.r.t. to the small parameter $\varepsilon$ yields the equation $(-\Delta)^su^{(1)}=0$, whose solution is independent of unknown coefficients. Differentiating \eqref{intro_eqn_MS} twice leads to $(-\Delta)^s u^{(2)} + b(x)h(x;u^{(1)})=0$, which specifically contains only the unknown $b$ with $h(x;u^{(1)})$ acting as a source term. We can then determine $b$ uniquely from the exterior data; see Section~\ref{Sec 3} for notation and details. Finally, let us remark that the nonlinearities here indeed help by reducing the nonlinear equation to $(-\Delta)^su^{(1)}=0$ after the first linearization. This then enables the use of both strong uniqueness property (Proposition~\ref{Prop:strong uniqueness}) and the Runge approximation property for $(-\Delta)^s$.  

As mentioned above, when $s=1$, $a(x,u, \nabla u)$ in $-\Delta u+a(x,u, \nabla u)=0$ cannot be fully determined in general, which inspires us to consider the nonlocal setting as in \eqref{intro_eqn_MS}. We may think of the three nonlinear terms in \eqref{intro_eqn_MS} as an example of the general nonlinear term $a(x,u, \nabla^t u)$. We show that they can be recovered simultaneously in Theorem~\ref{Main Thm 1}.

Finally, for the local equations, when $s=1$, the determination of multiple nonlinear terms was investigated in \cite{KU201909} for $-\Delta u+q(x)\nabla u\cdot\nabla u + a(x,u)=0$ and in \cite{LaiZhou2020} for the magnetic Schr\"odinger equation with nonlinear terms like $a_1(x,u)+a_2(u,\nabla u)$.   
Both \cite{KU201909} and \cite{LaiZhou2020} applied the higher order linearization and the density result for harmonic functions to solve the inverse problem. 
Here we apply an analogous density result, the Runge approximation, characterizing the density of the collection of solutions to the fractional Laplace equation in $L^2$ space. This density result is crucial to recovering the coefficient $d$; see Section~\ref{Sec 3} for details.

The paper is organized as follows. Section~\ref{Sec 2} introduces notation and several previous results, including the unique continuation property and the maximum principle.  
The well-posedness result for \eqref{intro_eqn_MS} is also stated and proven in Section~\ref{Sec 2}. Finally in Section~\ref{Sec 3} we use the results of Section~\ref{Sec 2} to show Theorem~\ref{Main Thm 1}.

\section{Preliminaries}\label{Sec 2}

In this section, we introduce notation and the well-posedness result for the problem \eqref{intro_eqn_MS}.

\subsection{Function spaces}
We starting by defining the H\"older spaces. Let $U\subset\R^n$ be an open set and $k$ a nonnegative integer. For a given $0<\alpha <1$, the H\"older space $C^{k,\alpha}(U)$ is defined by
\[
C^{k,\alpha}(U):=\left\{f:U\to \R:\ \norm{f}_{C^{k,\alpha}(U)}<\infty \right\},
\]
where 
\[
\norm{f}_{C^{k,\alpha}(U)}:=\sum_{|\beta|\leq k}\norm{\p ^\beta f}_{L^\infty(U)}+\sup_{x\neq y, \ x,y\in U}\sum_{|\beta|=k}\frac{|\p ^\beta f(x)-\p^\beta f(y)|}{|x-y|^\alpha}.
\]
Here $\beta=(\beta_1,\ldots,\beta_n)$ is a multi-index with $\beta_i \in \N^+ \cup \{0\}$ and $|\beta|=\beta_1 +\ldots +\beta_n$. When $k=0$, we simply set $C^\alpha(U) \equiv C^{0,\alpha}(U)$. 
We use $C_c^{k}(U)$ to denote the space of functions on $C^{k}(U)$ with compact support in $U$. Note that the above notation applies similarly for the closed set $\overline{U}$. 

Next, following the notation in \cite{ghosh2016calder}, for $0<s<1$, we use $H^{s}(\mathbb{R}^{n}):=W^{s,2}(\mathbb{R}^{n})$ to denote the $L^{2}$-based Sobolev space with the following norm: 
\begin{equation}\notag
\|u\|_{H^{s}(\mathbb{R}^{n})}^{2}\\=\|u\|_{L^{2}(\mathbb{R}^{n})}^{2}+\|(-\Delta)^{s/2}u\|_{L^{2}(\mathbb{R}^{n})}^{2}.\label{eq:H^s norm}
\end{equation}
Here, by the Parseval identity, the semi-norm $\|(-\Delta)^{s/2}u\|_{L^{2}(\mathbb{R}^{n})}^{2}$
can be expressed as 
\[
\|(-\Delta)^{s/2}u\|_{L^{2}(\mathbb{R}^{n})}^{2}=\left((-\Delta)^{s}u,u\right)_{\mathbb{R}^{n}},
\]
where the operator $(-\Delta)^s$ is as defined in \eqref{fractional Laplacian}.

For scalar $\beta\in\mathbb{R}$, we define the following Sobolev spaces: 
\begin{align*}
H^{\beta}(U) & :=\left\{u|_{U}:\, u\in H^{\beta}(\mathbb{R}^{n})\right\},\\
\widetilde{H}^{\beta}(U) & :=\text{closure of \ensuremath{C_{c}^{\infty}(U)} in \ensuremath{H^{\beta}(\mathbb{R}^{n})}},\\
H_{0}^{\beta}(U) & :=\text{closure of \ensuremath{C_{c}^{\infty}(U)} in \ensuremath{H^{\beta}(U)}},
\end{align*}
and 
\[
H_{\overline{U}}^{\beta}(\R^n):=\left\{u\in H^{\beta}(\mathbb{R}^{n}):\,\mathrm{supp}(u)\subset\overline{U}\right\}.
\]
The Sobolev space $H^{\beta}(U)$ is complete under the graph norm
\[
\|u\|_{H^{\beta}(U)}:=\inf\left\{ \|v\|_{H^{\beta}(\mathbb{R}^{n})}:\,v\in H^{\beta}(\mathbb{R}^{n})\mbox{ and }v|_{U}=u\right\} .
\]
It is known that $\widetilde{H}^{\beta}(U)\subsetneq H_{0}^{\beta}(U)$,
and $H_{\overline{U}}^{\beta}(\R^n)$ is a closed subspace of $H^{\beta}(\mathbb{R}^{n})$. Moreover, 
$$
    (H^\beta(U))^\ast = \widetilde{H}^{-\beta}(U),\qquad (\widetilde{H}^\beta(U))^\ast = H^{-\beta}(U),\quad \beta\in\R.
$$
If $U$ is also a bounded Lipschitz domain, the dual spaces can be expressed as 
\begin{align*}
	H^\beta_{\overline{U}}(\R^n)=\widetilde{H}^{\beta}(U),\quad \text{ and }\quad (H^\beta_{\overline{U}}(\R^n))^\ast = H^{-\beta}(U), \quad \text{ and }\quad (H^\beta(U))^\ast=H^{-\beta}_{\overline U}(\R^n).
\end{align*}
For more details on fractional Sobolev spaces, we refer to \cite{di2012hitchhiks, ghosh2016calder, mclean2000strongly}.

\subsection{Well-posedness} 
Let $0<t<s<1$ and let $\Omega\subset \R^n$, $n\geq 1$ be a bounded domain with smooth boundary $\p \Omega$. We consider the following Dirichlet problem with exterior data:
\begin{align}\label{eqn:S}
\begin{cases}
(-\Delta)^s u+ q(x,u,\nabla^t u) + a(x,u) =0 & \hbox{ in } \Omega,\\
u=f  &  \hbox{ in } \Omega_e,\\
\end{cases}
\end{align}
where $f\in C^\infty_c(\Omega_e)$, and $q$ and $a$ are as in \eqref{defintion q} and \eqref{Taylor series a}.  

For notational brevity, we define the function $h$ as
$$
h(x;u,v):=\int_{\R^n}\nabla^t u(x,y)\cdot\nabla^t v(x,y)\,dy,
$$
and, in particular, when $u=v$, we denote
\begin{align}\label{definition h}
h(x;u):=\int_{\R^n}\nabla^t u(x,y)\cdot\nabla^t u(x,y)\,dy.
\end{align}
We also define
\begin{align}\label{definition psi}
 \psi(x;d,u):= u^m(x)\int_{\R^n} d(x,y)\cdot\nabla^t u(x,y) \, dy.
\end{align}
Then $q$ can be expressed as
$q(x,u,\nabla^t u) = b(x)h(x;u)+\psi(x;d,u).$

In the following lemma, we analyze the boundness of $h$ and $\psi$, which will be a crucial ingredient in proving the well-posedness result.
\begin{lemma}\label{lemma gradient estimate}
	Let $0<t<s<1$ and $u, v\in C^s(\R^n)$. For a fixed constant $R>0$, we have
    \begin{align}\label{gradient bound 2}
    \int_{\R^n} {|(u(x)-u(y))(v(x)-v(y))|\over|x-y|^{n+2t}}\,dy\leq  C_n\|u\|_{C^s(\R^n)}\|v\|_{C^s(\R^n)} \LC{1\over 2s-2t}R^{2s-2t} + {2\over  t}R^{-2t}\RC  
    \end{align}
	for all $x\in\overline\Omega$. In particular, when $u=v$, we have
	\begin{align}\label{gradient bound 1}
	 \int_{\R^n} {|u(x)-u(y)|^2\over|x-y|^{n+2t}}\,dy\leq  C_n\|u\|^2_{C^s(\R^n)} \LC{1\over 2s-2t}R^{2s-2t} + {2\over  t}R^{-2t}\RC  
	\end{align}
	for all $x\in\overline\Omega$. Here the constant $C_n$ only depends on $n$.
\end{lemma}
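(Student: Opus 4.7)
The plan is a standard near/far splitting of the integration region at radius $R$ around $x$, using Hölder regularity close-in and $L^\infty$ control far away.

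First, I would split the integral as
\[
\int_{\R^n} \frac{|(u(x)-u(y))(v(x)-v(y))|}{|x-y|^{n+2t}}\, dy = I_{\text{near}} + I_{\text{far}},
\]
where $I_{\text{near}}$ is the integral over $\{y:|y-x|<R\}$ and $I_{\text{far}}$ is the integral over $\{y:|y-x|\geq R\}$. On the near region, I would exploit the $C^s$ regularity directly: since $|u(x)-u(y)|\leq \|u\|_{C^s(\R^n)}|x-y|^s$ and similarly for $v$, the integrand is pointwise bounded by $\|u\|_{C^s(\R^n)}\|v\|_{C^s(\R^n)}|x-y|^{2s-n-2t}$. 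Passing to polar coordinates and using $2s-2t>0$ (which holds because $t<s$), one gets
\[
I_{\text{near}} \leq \|u\|_{C^s(\R^n)}\|v\|_{C^s(\R^n)}\,\omega_{n-1}\int_0^R r^{2s-2t-1}\,dr = \frac{\omega_{n-1}}{2s-2t}\,\|u\|_{C^s(\R^n)}\|v\|_{C^s(\R^n)}\,R^{2s-2t}.
\]

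On the far region, the Hölder bound is wasteful; instead I would use the crude $L^\infty$ bound $|u(x)-u(y)|\leq 2\|u\|_{L^\infty(\R^n)}\leq 2\|u\|_{C^s(\R^n)}$, and similarly for $v$, so the integrand is bounded by $4\|u\|_{C^s(\R^n)}\|v\|_{C^s(\R^n)}|x-y|^{-n-2t}$. Again in polar coordinates, since $2t>0$,
\[
I_{\text{far}} \leq 4\,\|u\|_{C^s(\R^n)}\|v\|_{C^s(\R^n)}\,\omega_{n-1}\int_R^\infty r^{-2t-1}\,dr = \frac{2\omega_{n-1}}{t}\,\|u\|_{C^s(\R^n)}\|v\|_{C^s(\R^n)}\,R^{-2t}.
\]
Adding the two estimates and setting $C_n := 2\omega_{n-1}$ yields \eqref{gradient bound 2}, and the special case $u=v$ gives \eqref{gradient bound 1} immediately.

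There is no real obstacle here; the only thing to be careful about is that the two exponent conditions needed for integrability at $r=0$ and $r=\infty$ (namely $2s-2t>0$ and $2t>0$) are precisely what the hypothesis $0<t<s<1$ guarantees, and the resulting singular factors $1/(2s-2t)$ and $1/t$ are exactly those that appear in the stated bound. The choice to bound $\|u\|_{L^\infty}$ by $\|u\|_{C^s(\R^n)}$ is consistent with the convention recorded in the paper's definition of the Hölder norm.
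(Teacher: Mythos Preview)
Your proof is correct and follows essentially the same approach as the paper: the same near/far splitting at radius $R$, the H\"older bound on the near piece, the $L^\infty$ bound on the far piece, and the same polar-coordinate computations. The only inessential difference is cosmetic (the paper writes $M=\|u\|_{C^s}$, $\wt M=\|v\|_{C^s}$ and gets the identical two integrals); note also that $C_n=\omega_{n-1}$ would already suffice, though your $C_n=2\omega_{n-1}$ is of course still a valid upper bound.
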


\begin{proof}
We first denote $M:=\|u\|_{C^s(\R^n)}$ and $\wt M:=\|v\|_{C^s(\R^n)}$  and note that $u,v\in C^s(\R^n)$ yields
\begin{align}\label{Lipschtz}
 |u(x)-u(y)|\leq M|x-y|^s,\qquad |v(x)-v(y)|\leq \wt M|x-y|^s
\end{align}
for all $x,\,y\in\R^n$.

To show \eqref{gradient bound 2}, we note that for any fixed $x\in\overline\Omega$ we have
\begin{align*}
    &\hskip.5cm \int_{\R^n} {|u(x)-u(y)||v(x)-v(y)|\over|x-y|^{n+2t}}\,dy \\
	&= \int_{|x-y|\leq R} {|u(x)-u(y)||v(x)-v(y)|\over|x-y|^{n+2t}}\,dy+\int_{|x-y|> R}  {|u(x)-u(y)||v(x)-v(y)|\over|x-y|^{n+2t}}\,dy\\
	&\leq M\wt M \int_{|x-y|\leq R}  |x-y|^{-n-2t+2s} \,dy + (2M)(2\wt M)	\int_{|x-y|> R}  |x-y|^{-n-2t} \,dy.
\end{align*}
Here we used \eqref{Lipschtz} to derive the first term in the inequality.
Applying a change of variables to spherical coordinates and recalling that $t<s$, we then obtain
\begin{align*}
\int_{\R^n} {|u(x)-u(y)||v(x)-v(y)|\over|x-y|^{n+2t}}\,dy &\leq C_n M\wt M \int_{0}^R  \rho^{2s-2t-1} \,d\rho + C_n4 M\wt M\int_{R}^\infty \rho^{-2t-1} \,dy\\
 &= C_n\|u\|_{C^s(\R^n)}\|v\|_{C^s(\R^n)} \LC{1\over 2s-2t}R^{2s-2t} +  {2\over t}R^{-2t}\RC,
\end{align*}
which completes the proof of \eqref{gradient bound 2}.
Finally, the estimate \eqref{gradient bound 2} implies \eqref{gradient bound 1} when $u=v$. 
\end{proof}
 
We note that Lemma \ref{lemma gradient estimate} implies that
\begin{align}\label{bound h}
    \|h(x;u,v)\|_{L^\infty(\Omega)}&=\| \int_{\R^n}\nabla^tu(x,y)\cdot\nabla^tv(x,y)\,dy \|_{L^\infty( \Omega)} \notag\\
    &\leq  \|{c_{n,t}\over 2}\int_{\R^n}{|u(x)-u(y)||v(x)-v(y)|\over|x-y|^{n+2t}}\,dy \|_{L^\infty( \Omega)} \notag\\
    &\leq  C\|u\|_{C^s(\R^n)}\|v\|_{C^s(\R^n)}\LC{1\over 2s-2t}R^{2s-2t} +   {2\over  t}R^{-2t}\RC,
\end{align}
where the constant $C$ depends on $n$ and $t$, and thus
$
    h(x;u,v)
$
is in $L^\infty(\Omega)$. 

Similarly, Lemma~\ref{lemma gradient estimate} also implies that 
\begin{align}\label{bound psi}
\|\psi(x;d,u) \|_{L^\infty(\Omega)} &\le \norm{u}_{L^\infty(\Omega)}^m \norm{ \int_{\R^n}\abs{d(\cdot,y)}^2 \, dy}_{L^\infty(\Omega)}^{1/2} \|\int_{\R^n}\abs{ \nabla^t u(x,y)}^2 \, dy \|_{L^\infty(\Omega)}^{1/2} \notag\\
&\le C  \norm{u}_{L^\infty(\Omega)}^m \|\int_{\R^n} {|u(x)-u(y)|^2\over|x-y|^{n+2t}}\,dy \|_{L^\infty(\Omega)}^{1/2}\notag\\
&\le C \norm{u}_{C^s(\R^n)}^{1+m} \LC{1\over 2s-2t}R^{2s-2t} + {2\over t}R^{-2t}\RC^{1/2}.
\end{align}
Here $C$ depends on $\Omega,n,t$, and the coefficient $d$. \\

\begin{rmk}
	Lemma~\ref{lemma gradient estimate} suggests that in order to have pointwise control on the terms $h(x;u)$ and $\psi(x;d,u)$, we must consider $t$ satisfying $0<t<s<1$, as the above arguments fail when $t=s$. 
\end{rmk}

The following lemma will also be used in showing the contraction property in the proof of Theorem~\ref{Thm:well posedness}.
\begin{lemma}\label{lemma:discrepancy} Let $0<t<s<1$ and $u_1,u_2\in C^s(\R^n)$. We have the following two estimates:
$$
    \|h(x;u_1) - h(x;u_2)\|_{L^\infty(\Omega)} \leq C\|u_1-u_2\|_{C^s(\R^n)}\|u_1+u_2\|_{C^s(\R^n)} 
$$
and
\begin{align*}
    &\hskip.5cm \|\psi(x;d,u_1) - \psi(x;d,u_2)\|_{L^\infty(\Omega)} \\
    & \leq C\|u_1-u_2\|_{C^s(\R^n)}\LC  \|u_1\|_{C^s(\R^n)} \sum_{k=1}^m \|u_1\|_{C^s(\R^n)}^{m-k} \|u_2\|_{C^s(\R^n)}^{k-1} + \|u_2\|^m_{C^s(\R^n)} \RC.
\end{align*}
Here the constant $C$ depends only on $n,t,s,d$, and $\Omega$.
\end{lemma}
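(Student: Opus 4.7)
The plan is to reduce both estimates, via elementary algebraic identities, to pointwise applications of the gradient bounds \eqref{gradient bound 2} and \eqref{gradient bound 1} already established in Lemma~\ref{lemma gradient estimate}.

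\emph{First estimate.} I would exploit the fact that the symmetric bilinear form
\[
h(x;u,v) = \frac{c_{n,t}}{2}\int_{\R^n}\frac{(u(x)-u(y))(v(x)-v(y))}{|x-y|^{n+2t}}\,dy
\]
is linear in each slot. Applying the pointwise identity $a^2-b^2 = (a-b)(a+b)$ inside the integrand of $h(x;u_1)-h(x;u_2)$ gives
\[
h(x;u_1)-h(x;u_2) = h(x;\,u_1-u_2,\,u_1+u_2).
\]
The bound \eqref{gradient bound 2} applied to the pair $u = u_1-u_2$ and $v = u_1+u_2$, with any convenient choice of $R>0$, then yields the claimed inequality directly.

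\emph{Second estimate.} I would decompose
\[
\psi(x;d,u_1)-\psi(x;d,u_2) = (u_1^m-u_2^m)(x)\int_{\R^n} d\cdot\nabla^t u_1\,dy + u_2^m(x)\int_{\R^n} d\cdot\nabla^t(u_1-u_2)\,dy
\]
by adding and subtracting $u_2^m(x)\int d\cdot\nabla^t u_1\,dy$. For the first piece I would use the telescoping factorization $u_1^m-u_2^m = (u_1-u_2)\sum_{k=1}^m u_1^{m-k}u_2^{k-1}$; this is precisely what produces the sum on the right-hand side of the claimed inequality. Each of the two integrals $\int_{\R^n} d(x,y)\cdot\nabla^t w(x,y)\,dy$ (with $w=u_1$ and $w=u_1-u_2$) is controlled by Cauchy--Schwarz in $y$ together with the compact support and continuity of $d$ and the bound \eqref{gradient bound 1}, exactly as in the derivation of \eqref{bound psi}; this yields $C\,\|w\|_{C^s(\R^n)}$. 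Combining these two bounds with the telescoping identity and taking $L^\infty$ in $x$ delivers the second estimate.

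Neither step presents a real obstacle: both reduce, after elementary algebra, to the pointwise estimates already proved in Lemma~\ref{lemma gradient estimate}. The only bookkeeping is to absorb the $R$-dependence, the factor $c_{n,t}$, and $\|d\|_{L^\infty}$ into the single constant $C$ permitted by the lemma statement.
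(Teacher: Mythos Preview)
Your proposal is correct and essentially identical to the paper's proof: the paper also uses the bilinear identity $h(x;u_1)-h(x;u_2)=h(x;u_1-u_2,u_1+u_2)$ together with \eqref{bound h} (which is just \eqref{gradient bound 2} rephrased, taken with $R=1$), and for $\psi$ it performs the same add-and-subtract decomposition, the same telescoping factorization of $u_1^m-u_2^m$, and the same Cauchy--Schwarz-plus-\eqref{gradient bound 1} argument as in \eqref{bound psi}.
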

\begin{proof}
First, from the definition of $h$ and \eqref{bound h} with $R=1$, we derive
\begin{align*}
h(x;u_1) - h(x;u_2) &= \int_{\R^n}\nabla^tu_1(x,y)\cdot\nabla^tu_1(x,y)\,dy - \int_{\R^n}\nabla^tu_2(x,y)\cdot\nabla^tu_2(x,y)\,dy \\
&= \int_{\R^n}(\nabla^tu_1 - \nabla^tu_2) \cdot (\nabla^tu_1 + \nabla^tu_2)(x,y) \,dy \\
&=h(x; u_1-u_2, u_1+u_2)\\
&\leq  C\|u_1-u_2\|_{C^s(\R^n)}\|u_1+u_2\|_{C^s(\R^n)},
\end{align*}
for any $x\in\Omega$, where $C$ is a constant depending on $s,t$ and $n$. 

Next, for any $x\in\Omega$, we consider
\begin{align*}
&\hskip.5cm \psi(x;d,u_1) - \psi(x;d,u_2) \\
&= u_1^m(x)\int_{\R^n} d(x,y)\cdot\nabla^t u_1(x,y)\,dy - u_2^m(x)\int_{\R^n} d(x,y)\cdot\nabla^t u_2(x,y) \, dy\\
&= (u_1^m(x)-u_2^m(x))\int_{\R^n} d(x,y)\cdot\nabla^t u_1(x,y)\,dy\\
&\quad + u_2^m(x) \LC \int_{\R^n} d(x,y)\cdot\nabla^t u_1(x,y)\,dy - \int_{\R^n} d(x,y)\cdot\nabla^t u_2(x,y)\,dy \RC\\
&= (u_1(x)-u_2(x))\LC \sum_{k=1}^m u_1^{m-k}u_2^{k-1} \RC \int_{\R^n} d(x,y)\cdot\nabla^t u_1(x,y)\,dy\\
&\quad + u_2^m(x) \LC \int_{\R^n} d(x,y)\cdot\nabla^t (u_1- u_2)(x,y)\,dy \RC.
\end{align*}
Application of a similar argument as in \eqref{bound psi} gives the upper bound for the following terms:
$$
    \int_{\R^n} d(x,y)\cdot\nabla^t u_1(x,y)\,dy \leq C \|u_1\|_{C^s(\R^n)}
$$
and
$$
u_2^m(x) \LC \int_{\R^n} d(x,y)\cdot\nabla^t (u_1- u_2)(x,y)\,dy \RC\leq C \|u_2\|^m_{C^s(\R^n)} \|u_1-u_2\|_{C^s(\R^n)}.
$$
Combining these estimates, we obtain the desired estimate for $\psi$.

\end{proof}

We are now ready to show the well-posedness result.
\begin{thm}[Well-posedness]\label{Thm:well posedness}
Let $0<t<s<1$ and let $\Omega\subset \R^n$, $n\geq 1$ be a bounded domain with smooth boundary $\p\Omega$. 
Suppose that $b(x),\, d(x,y)$, and $a(x,z)$ satisfy the conditions \eqref{condition bd} - \eqref{Taylor series a}.
Then there exists a small parameter $0<\varepsilon<1$ such that when
\begin{align}\label{small boundary}
f\in \mathcal{X}:=\left\{f\in C^\infty_c(\Omega_e) :\ \norm{f}_{C^\infty_c(\Omega_e)}\leq \vareps \right\},
\end{align} 
the boundary value problem \eqref{eqn:S} has a unique small solution $u \in C^s(\R^n)\cap H^s(\R^n)$. Moreover, the solution $u$ satisfies the estimate
$$
    \|u\|_{C^{s}(\R^n)} \leq C \|f\|_{C^\infty_c(\Omega_e)},
$$ 
where the constant $C>0$ is independent of $u$ and $f$.
\end{thm}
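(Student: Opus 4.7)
The plan is to recast \eqref{eqn:S} as a fixed-point problem and apply the Banach contraction mapping theorem in a small closed ball of $C^s(\R^n)\cap H^s(\R^n)$. First I would invoke the linear theory for the fractional Dirichlet problem: for any $F\in L^\infty(\Omega)$ and $f\in C^\infty_c(\Omega_e)$, the equation $(-\Delta)^s v=F$ in $\Omega$ with $v=f$ in $\Omega_e$ admits a unique solution $v\in H^s(\R^n)$ which in addition is $C^s(\R^n)$-regular up to the boundary, with $\|v\|_{C^s(\R^n)}+\|v\|_{H^s(\R^n)}\le C(\|F\|_{L^\infty(\Omega)}+\|f\|_{C^\infty_c(\Omega_e)})$. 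Using this as a black box, I would define, for fixed $f\in\mathcal X$, the map $T:B_\rho\to C^s(\R^n)\cap H^s(\R^n)$ by letting $T(v)$ be the unique solution of the linearized problem
\begin{equation*}
(-\Delta)^s T(v) = -b(x)\,h(x;v)-\psi(x;d,v)-a(x,v)\quad\text{in }\Omega,\qquad T(v)=f\quad\text{in }\Omega_e,
\end{equation*}
where $B_\rho:=\{v\in C^s(\R^n)\cap H^s(\R^n):\|v\|_{C^s(\R^n)}\le\rho\}$ and $\rho=\rho(\varepsilon)$ is to be chosen.

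Next I would verify that $T$ maps $B_\rho$ into itself for $\rho$ and $\varepsilon$ sufficiently small. Using the estimate \eqref{bound h} with a convenient choice of $R$ gives $\|h(\cdot;v)\|_{L^\infty(\Omega)}\le C\|v\|_{C^s(\R^n)}^2$, and \eqref{bound psi} gives $\|\psi(\cdot;d,v)\|_{L^\infty(\Omega)}\le C\|v\|_{C^s(\R^n)}^{m+1}$. For the potential $a$, I would use the holomorphy assumption \eqref{condition a} together with the expansion \eqref{Taylor series a}: since $a_k(x,0)=0$ for $k\le m$, the series starts at $k\ge m+1\ge 3$, so $\|a(\cdot,v)\|_{L^\infty(\Omega)}\le C\|v\|_{L^\infty}^{m+1}\le C\rho^{m+1}$ for $v\in B_\rho$ with $\rho$ below the radius of convergence. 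Combining these with the linear estimate yields
\begin{equation*}
\|T(v)\|_{C^s(\R^n)} \le C\bigl(\rho^2+\rho^{m+1}\bigr) + C\varepsilon,
\end{equation*}
so choosing $\rho=2C\varepsilon$ and $\varepsilon$ small enough forces $T(B_\rho)\subset B_\rho$.

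Then I would show $T$ is a contraction on $B_\rho$. Applied to two inputs $v_1,v_2\in B_\rho$, linearity of the Dirichlet problem gives
\begin{equation*}
\|T(v_1)-T(v_2)\|_{C^s(\R^n)} \le C\bigl(\|b(h(\cdot;v_1)-h(\cdot;v_2))\|_{L^\infty}+\|\psi(\cdot;d,v_1)-\psi(\cdot;d,v_2)\|_{L^\infty}+\|a(\cdot,v_1)-a(\cdot,v_2)\|_{L^\infty}\bigr).
\end{equation*}
The first two terms are controlled by Lemma \ref{lemma:discrepancy}, which yields factors of order $\rho$ and $\rho^m$ respectively multiplying $\|v_1-v_2\|_{C^s(\R^n)}$. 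For the $a$ difference I would expand $a(x,v_1)-a(x,v_2)=\sum_{k\ge m+1}\tfrac{a_k(x)}{k!}(v_1^k-v_2^k)$ and factor $v_1^k-v_2^k=(v_1-v_2)\sum_{j=0}^{k-1}v_1^{k-1-j}v_2^j$, again producing a prefactor that is $O(\rho^m)$ on $B_\rho$. Together this gives $\|T(v_1)-T(v_2)\|_{C^s(\R^n)}\le C\rho\,\|v_1-v_2\|_{C^s(\R^n)}$, which is a strict contraction for $\varepsilon$ (and hence $\rho$) small enough.

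The Banach fixed-point theorem then delivers a unique $u\in B_\rho$ with $T(u)=u$, which is the desired solution of \eqref{eqn:S}, and the closed-ball radius $\rho\le C\varepsilon$ yields the claimed bound $\|u\|_{C^s(\R^n)}\le C\|f\|_{C^\infty_c(\Omega_e)}$; uniqueness among small solutions follows from the contraction estimate. The main obstacle I anticipate is ensuring that the nonlinear right-hand side really lies in a class for which the linear Dirichlet problem delivers $C^s(\R^n)$-regularity up to $\partial\Omega$: the pointwise bounds \eqref{bound h}, \eqref{bound psi} and the holomorphy of $a$ ensure the forcing is in $L^\infty(\Omega)$, which together with the smoothness and compact support of the exterior datum is exactly what is needed to apply the boundary regularity theory for $(-\Delta)^s$. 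The condition $0<t<s<1$ in Lemma \ref{lemma gradient estimate} is critical: it is precisely what allows pointwise control of the nonlocal gradient terms and thus closes the fixed-point argument in $C^s$.
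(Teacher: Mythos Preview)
Your proposal is correct and follows essentially the same route as the paper: a Banach fixed-point argument in a small ball of $C^s(\R^n)$, using the linear $C^s$ boundary regularity for the fractional Dirichlet problem (the Ros-Oton--Serra estimate) together with the $L^\infty$ bounds \eqref{bound h}, \eqref{bound psi}, the holomorphy/series expansion of $a$, and Lemma~\ref{lemma:discrepancy} for the contraction. The only cosmetic difference is that the paper first subtracts off the $s$-harmonic extension $u_0$ of $f$ and runs the contraction on $v=u-u_0$ with zero exterior data, whereas you keep $f$ in the map $T$; the estimates and the smallness conditions are otherwise identical.
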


\begin{proof}
	Suppose that $\|f\|_{C^\infty_{c}(\Omega_e)}\leq \varepsilon$ for some sufficiently small $\vareps>0$. We may extend $f$ to the whole space $\R^n$ by zero so that $\|f\|_{C^\infty_{c}(\R^n)}\leq \varepsilon$.   

Before getting into the proof, we recall the following result of \cite{ghosh2016calder}. For $g\in L^\infty(\Omega)$, there exists a unique solution $\tilde{v}\in H^s(\R^n)$ to the problem
\begin{align}\label{zero boundary value problem} 
\begin{cases}
(-\Delta)^s \tilde{v} =g & \text{ in }\Omega, \\
\tilde{v}=0 & \text{ in }\Omega_e.
\end{cases}
\end{align}
Moreover, by \cite[Proposition~1.1]{ros2014dirichlet}, we have
\begin{align*}
\|\tilde{v}\|_{C^{s}(\R^n)}\leq C\|g\|_{L^\infty(\Omega)}
\end{align*}
for some constant $C>0$ depending on $s$ and $\Omega$. This enables us to define the solution operator $$\mathcal L_s^{-1}: g\in L^\infty(\Omega) \rightarrow \tilde{v}\in C^s(\R^n)\cap H^s(\R^n)$$ to \eqref{zero boundary value problem}. The solution $\mathcal{L}_s^{-1}(g)$ to \eqref{zero boundary value problem} then satisfies
\begin{align}\label{Ros-Oton estimate}
\|\mathcal L_s^{-1}(g)\|_{C^{s}(\R^n)}\leq C\|g\|_{L^\infty(\Omega)}.
\end{align}

We may now proceed to the linearization procedure.

\noindent\textbf{Step 1: The linearized problem.}
We first consider the linear part of \eqref{eqn:S}, given by
\begin{align}\label{eqn:u0}
\begin{cases}
(-\Delta)^s u_0 =0 & \hbox{ in } \Omega,\\
u_0=f  &  \hbox{ in } \Omega_e.\\
\end{cases} 
\end{align}
Due to \cite{ghosh2016calder}, there exists a unique solution $u_0 \in H^s(\R^n)$ to \eqref{eqn:u0}. By considering $(-\Delta)^s(u_0-f) = -(-\Delta)^sf$ with $(u_0-f)|_{\Omega_e}=0$, we may then apply \eqref{Ros-Oton estimate} to obtain
\begin{align}\label{uniform bound of u_0}
	\norm{u_0}_{C^s(\R^n)}\leq C\norm{f}_{C_c^\infty(\Omega_e)},
\end{align} 
where the constant $C>0$ depends only on $s$ and $\Omega$.

We next consider $v:=u-u_0$, where $u_0$ satisfies \eqref{eqn:u0} and $u$ satisfies the original nonlinear equation \eqref{eqn:S}. If such a function $v$ exists, then $v$ satisfies the following problem:
\begin{align}\label{eqn:v}
\begin{cases}
(-\Delta)^s v = G(v) & \hbox{ in } \Omega,\\
v=0  &  \hbox{ in } \Omega_e,
\end{cases}
\end{align}
where $G(\phi)$ is defined by
\begin{align*}
 G(\phi):= -b(x) h(x;u_0+\phi) - \psi(x;d,u_0+\phi)-a(x,u_0 +\phi).
\end{align*}
The problem is now reduced to showing the unique existence of a solution $v$ to \eqref{eqn:v}. To this end, we will construct a contraction map and establish the unique existence of a solution by the contraction mapping principle.\\

\noindent\textbf{Step 2: Construct a contraction map.} 
Let us define the set 
$$\mathcal{M} = \left\{\phi\in C^{s}(\R^n):\ \phi|_{\Omega_e}=0,\ \|\phi\|_{C^{s}(\R^n)}\leq \delta \right\},$$
where $0 < \delta < 1$ will be determined later (by choosing sufficiently small $\delta$ to satisfy the specific inequalities below). It is easy to see that $\mathcal{M}$ is a Banach space.

We define the map $\mathcal{F}$ on $\mathcal{M}$ by
$$\mathcal{F}:=\mathcal{L}_s^{-1}\circ G.$$ 
We will show below that $\mathcal{F}$ is indeed a contraction map on $\mathcal{M}$.

We first claim that $\F: \mathcal{M}\to \mathcal{M}$.
By \eqref{bound h}, \eqref{bound psi}, \eqref{Ros-Oton estimate}, and the Taylor expansion of $a$ \eqref{Taylor series a}, for any $\phi \in \mathcal{M}$, we obtain
$\mathcal{F}(\phi)\in  C^{s}(\R^n)\cap H^s(\R^n)$, and 
\begin{align}\label{F:M to M}
   \norm{\F(\phi)}_{C^{s}(\R^n)}
  &\leq C\|G(\phi)\|_{L^\infty(\Omega)} \notag\\
  &= C  \|b(x) h(x;u_0+\phi) + \psi(x; d,u_0+\phi)+a(x,u_0 +\phi)\|_{L^\infty(\Omega)}  \notag\\
  &\leq C \|b\|_{C (\Omega )}\|u_0+\phi\|_{C^s(\R^n)}^2 +C \|u_0+\phi\|_{C^s(\R^n)}^{m+1} + C \|u_0+\phi\|^{m+1}_{C^s(\overline\Omega)}  \notag\\
  & \leq  C \|b\|_{C (\Omega )}(\delta+\vareps)^2 + C (\delta+\vareps)^{m+1} +C(\delta+\vareps)^{m+1},
\end{align} 
where the constant $C$ depends on $s,t,n$ and $\Omega$. This indicates that the function $G(\phi)\in L^\infty(\Omega)$.
Choosing sufficiently small $\varepsilon,\delta$, we then have 
\[
  \norm{\F(\phi)}_{C^{s}(\R^n)} \leq C (\varepsilon+\delta)^2 +  C (\varepsilon+\delta)^{1+m} +C(\delta+\vareps)^{m+1}< \delta,
\]  
which yields that $\F$ maps $\mathcal{M}$ into itself.

We also need to show that $\F$ is contractive. For any $\phi_1,\phi_2 \in \mathcal{M}$, we apply Lemma~\ref{lemma:discrepancy}, \eqref{Taylor series a}, and \eqref{Ros-Oton estimate} to get
\begin{align}\label{some estimate 1}
 \norm{\F(\phi_1)-\F(\phi_2)}_{C^{s}(\R^n)}   &=  \|(\mathcal{L}_s^{-1}\circ G)(\phi_1) -(\mathcal{L}_s^{-1}\circ G)(\phi_2)\|_{C^{s}(\R^n)}   \notag \\
&\leq  C \|G(\phi_1) -G(\phi_2) \|_{L^\infty(\Omega)}  \notag \\
&\leq  C\|b(x) (h(x;u_0+\phi_1)- h(x;u_0+\phi_2))\|_{L^\infty(\Omega)} \notag\\
&\quad+ C \|\psi(x;d,u_0 +\phi_1)-\psi(x;d,u_0 +\phi_2)\|_{L^\infty(\Omega)}  \notag\\
&\quad + C \|a(x,u_0 +\phi_1)-a(x,u_0 +\phi_2)\|_{L^\infty(\Omega)}  \notag\\
&\leq C  (\varepsilon+\delta)\|\phi_1-\phi_2\|_{C^s(\R^n)} + C  (\vareps +\delta)^m \|\phi_1 -\phi_2\|_{C^{s}(\R^n)} \notag\\
&\quad + C(\varepsilon +\delta)^{m} \|\phi_1 -\phi_2\|_{C^{s}(\R^n)},
\end{align} 
where $C$ is independent of $\varepsilon,\,\delta$.

By further taking $\varepsilon,\, \delta$ sufficiently small so that $C(\varepsilon+\delta) + C (\varepsilon+\delta)^m + C(\varepsilon+\delta)^m <1$, the following estimate also holds: 
$$
    \norm{\F(\phi_1)-\F(\phi_2)}_{C^{s}(\R^n)}<\|\phi_1 -\phi_2\|_{C^{s}(\R^n)}.
$$
Combining these results, we have shown that $\F$ is a contraction mapping on $\mathcal{M}$.

Finally, the contraction mapping principle gives that there is a fixed point $v\in \mathcal{M}$ such that $\mathcal{F}(v)=v$ and thus $v\in H^s(\R^n)$ as well. 
This $v$ is the solution to the equation \eqref{eqn:v} and also satisfies
\begin{align}\label{estimate v}
\|v\|_{C^{s}(\R^n)}\leq  C (\|u_0\|^2_{C^s(\overline\Omega)} + \|v\|^2_{C^{s}(\overline\Omega)})\leq  C\LC \varepsilon \|f\|_{C^\infty_c(\Omega_e)} + \delta\|v\|_{C^{s}(\overline\Omega)}  \RC 
\end{align} 
due to \eqref{F:M to M}.
For $\delta$ small enough, by absorbing $C\delta\|v\|_{C^{s}(\overline\Omega)}$ into the left-hand side of \eqref{estimate v}, we then have 
\begin{align*} 
\|v\|_{C^{s}(\R^n)}
\leq  C \varepsilon \|f\|_{C^\infty_c(\Omega_e)}.
\end{align*} 
As a result, we obtain the solution $u=u_0+v \in C^{s}(\R^n)$ to \eqref{eqn:S} and it satisfies
\begin{align*} 
\|u\|_{C^{s}(\R^n)}
\leq  C \|f\|_{C^\infty_c(\Omega_e)}
\end{align*} 
for some constant $C>0$ independent of $u$ and $f$. This completes the proof of well-posedness for the boundary value problem \eqref{eqn:S}.
\end{proof}

\subsection{The DN map}
In this subsection, we will define the corresponding DN map for the equation \eqref{eqn:S}.

By Theorem~\ref{Thm:well posedness}, for $f\in \mathcal{X}$, there exists a unique (small) solution $u_f\in C^s(\R^n)\cap H^s(\R^n)$ to \eqref{eqn:S} with the exterior data $u_f|_{\Omega_e}=f$. We define the DN map as follows:
\begin{equation}\label{DN integral}
\begin{aligned}
\left\langle \Lambda (f),\varphi\right\rangle  := \int_{\R^n}(-\Delta)^{s/2}u_f (-\Delta)^{s/2}\varphi \,dx+ \int_{\Omega}q(x,u_f,\nabla^t u_f)  \varphi 
 + a(x,u_f)\varphi \, dx
\end{aligned}
\end{equation}
for $\varphi\in \widetilde{H}^s(\Omega_e)$, where $q$ and $a$ are as defined in \eqref{defintion q} and \eqref{Taylor series a}.
Note that \eqref{DN integral} is not a bilinear form as in \cite{ghosh2016calder} due to the nonlinear terms $q$ and $a$.

\begin{prop}\label{prop:DNmap} 
	 Let $\Omega\subset \R^n$ be a bounded domain with smooth boundary $\p \Omega$ for $n\geq 1$, $0<t<s<1$. 
	 Suppose that $b,\, d$, and $a=a(x,z)$ satisfy the conditions \eqref{condition bd} - \eqref{Taylor series a}.
	  Then the DN map
	\[
	\Lambda: \mathcal{X}\subset \widetilde{H}^{s}(\Omega_e) \to \big( \widetilde{H}^{s}(\Omega_e)\big)^{*} 
	\]
	is bounded and satisfies
	\begin{equation}
	\left.\Lambda (f)\right|_{\Omega_e}=\left. (-\Delta)^s u_f\right|_{\Omega_e}.
	\end{equation}
\end{prop}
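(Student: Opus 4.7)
The plan is to exploit the support property of the test space $\widetilde{H}^s(\Omega_e)$---namely, that its elements vanish a.e.\ on $\Omega$---to reduce the definition \eqref{DN integral} to the purely quadratic term, and then deduce both the boundedness and the claimed identity directly from the well-posedness estimate of Theorem~\ref{Thm:well posedness}.

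First I would verify well-definedness. For $f \in \mathcal{X}$, Theorem~\ref{Thm:well posedness} yields a unique solution $u_f \in C^s(\R^n) \cap H^s(\R^n)$ with $\|u_f\|_{C^s(\R^n)} \leq C\|f\|_{C^\infty_c(\Omega_e)}$, and correspondingly $(-\Delta)^{s/2} u_f \in L^2(\R^n)$. The estimates \eqref{bound h} and \eqref{bound psi}, together with the Taylor expansion \eqref{Taylor series a} of $a$, ensure that $q(x, u_f, \nabla^t u_f)$ and $a(x, u_f)$ both lie in $L^\infty(\Omega)$, so all integrals on the right-hand side of \eqref{DN integral} are absolutely convergent for any $\varphi \in H^s(\R^n)$.

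Next, to establish boundedness, I fix $\varphi \in \widetilde{H}^s(\Omega_e)$. Since $\widetilde{H}^s(\Omega_e)$ is contained in $H^s_{\overline{\Omega_e}}(\R^n)$, we have $\varphi = 0$ a.e.\ on $\Omega$, so both $\int_\Omega q \varphi \, dx$ and $\int_\Omega a(x, u_f) \varphi \, dx$ vanish identically. The definition \eqref{DN integral} therefore collapses to
\[
\langle \Lambda(f), \varphi \rangle = \int_{\R^n} (-\Delta)^{s/2} u_f \, (-\Delta)^{s/2} \varphi \, dx,
\]
and Cauchy--Schwarz combined with the well-posedness bound gives $|\langle \Lambda(f), \varphi \rangle| \leq \|u_f\|_{H^s(\R^n)} \|\varphi\|_{H^s(\R^n)} \leq C\|f\|_{C^\infty_c(\Omega_e)} \|\varphi\|_{\widetilde{H}^s(\Omega_e)}$, so $\Lambda(f) \in (\widetilde{H}^s(\Omega_e))^*$.

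Finally, the identity $\Lambda(f)|_{\Omega_e} = (-\Delta)^s u_f|_{\Omega_e}$ follows by interpreting the simplified pairing above as the $H^{-s}(\R^n)$--$H^s(\R^n)$ duality $\langle (-\Delta)^s u_f, \varphi \rangle$; since $\varphi$ is supported in $\overline{\Omega_e}$, this pairing is precisely the action of the restriction $(-\Delta)^s u_f|_{\Omega_e}$ on $\varphi$. The only mildly delicate step is confirming $u_f \in H^s(\R^n)$ so that $(-\Delta)^s u_f \in H^{-s}(\R^n)$ is well-defined; this is inherited from the decomposition $u_f = u_0 + v$ used in the proof of Theorem~\ref{Thm:well posedness}, where $u_0 \in H^s(\R^n)$ solves the linear exterior problem and $v = \mathcal{L}_s^{-1}(G(v)) \in H^s(\R^n)$ is the fixed point produced by the contraction argument. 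Beyond this, the proof is routine.
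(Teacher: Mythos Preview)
Your argument is correct and reaches the same conclusion as the paper, but the route is slightly different. The paper keeps the $\int_\Omega$ terms in \eqref{DN integral}, applies the Parseval identity to rewrite $\int_{\R^n}(-\Delta)^{s/2}u_f\,(-\Delta)^{s/2}\varphi\,dx$ as $\langle(-\Delta)^s u_f,\varphi\rangle$, and then invokes the equation \eqref{eqn:S} on $\Omega$ to cancel those nonlinear terms against the $\Omega$-portion of the pairing, leaving $\int_{\Omega_e}(-\Delta)^s u_f\,\varphi\,dx$. You instead observe at the outset that $\varphi\in\widetilde{H}^s(\Omega_e)\subset H^s_{\overline{\Omega_e}}(\R^n)$ vanishes a.e.\ on $\Omega$, so the $\int_\Omega$ terms drop immediately and the equation is never needed; the identity then follows from Parseval alone. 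Your version is marginally more direct and also makes the boundedness claim explicit (the paper's proof does not spell this out), while the paper's version has the advantage of showing that the full weak formulation \eqref{DN integral} is consistent with the equation for \emph{any} $H^s$ test function, not just those supported in $\Omega_e$. Either way the argument is routine once well-posedness is in hand.
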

\begin{proof}
For small $f\in C^\infty_c(\Omega_e)$, due to Theorem~\ref{Thm:well posedness}, there exists a unique solution $u_f$ to \eqref{eqn:S}. Following the discussion in \cite[Section 3]{ghosh2016calder}, we may apply the Parseval identity to \eqref{DN integral}. Then for arbitrary $\varphi\in \widetilde{H}^s(\Omega_e)$, we obtain that
	\begin{align*}
 	& \int_{\R^n} ((-\Delta)^{s/2}u_f (-\Delta)^{s/2}\varphi \,dx + \int_\Omega q(x,u_f,\nabla^t u_f) \varphi
	+ a(x,u_f) \varphi \, dx\\
	=\, & \int_{\Omega_e}(-\Delta)^{s}u_f \varphi \, dx, 
	\end{align*}
	by using \eqref{eqn:S}.
	Thus $\Lambda (f)=(-\Delta)^s u_f$ in $\Omega_e$. 
\end{proof}

\subsection{Known results}
We state two known results which are crucial in the proof of Theorem~\ref{Main Thm 1}.

The first is the unique continuation property (UCP) for the fractional Laplacian \cite[Theorem~1.2]{ghosh2016calder}.
\begin{prop}[UCP]\label{Prop:strong uniqueness}
	Suppose that $U$ is a nonempty open subset of $\R^n$, $n\geq 1$. Let $0<s<1$ and $v\in H^{r}(\R^n)$ for $r\in\R$.
	If $v=(-\Delta)^s v=0$
	in some open set $U$ of $\R^n$, then $v\equiv0$ in $\mathbb{R}^{n}$. 
\end{prop}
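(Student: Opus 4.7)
The plan is to reduce this nonlocal unique continuation statement to a local one via the Caffarelli--Silvestre extension, and then invoke a Carleman estimate for the resulting degenerate elliptic operator. Throughout, fix $v\in H^r(\R^n)$ vanishing together with $(-\Delta)^s v$ on an open set $U\subset\R^n$.

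First I would introduce the extension $w:\R^{n+1}_+\to\R$ defined as the unique solution to
\begin{equation*}
\di(y^{1-2s}\nabla w)=0 \ \text{ in }\R^{n+1}_+,\qquad w(x,0)=v(x),
\end{equation*}
so that, up to a normalizing constant, $-\lim_{y\to 0^+} y^{1-2s}\p_y w(x,y)=(-\Delta)^s v(x)$ in the distributional sense. The hypothesis then translates into the statement that $w$ vanishes on $U\times\{0\}$ and that the weighted conormal derivative $y^{1-2s}\p_y w$ also vanishes there. By standard boundary regularity for the Muckenhoupt-weighted equation with $A_2$ weight $y^{1-2s}$, the trace $w(\cdot,0)=v$ is continuous on $U$ and $w$ is smooth in the interior of $\R^{n+1}_+$.

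Second, I would extend $w$ to all of $\R^{n+1}\setminus(\R^n\setminus U)$ by even reflection across $U\times\{0\}$, viewing the problem in a neighborhood of an interior point $(x_0,0)$ with $x_0\in U$; the vanishing of both Dirichlet and weighted Neumann data on $U\times\{0\}$ makes this reflected function a weak solution of the same degenerate equation across the hyperplane (with the weight extended as $|y|^{1-2s}$).

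The main step, which is also the hardest part, is a Carleman estimate for the degenerate operator $L:=\di(|y|^{1-2s}\nabla\cdot)$ of the form
\begin{equation*}
\sum_{k=0}^{1}\tau^{3-2k}\int |y|^{1-2s}\,e^{2\tau\phi}\,|\nabla^k u|^2\,dx\,dy \ \le\ C\int |y|^{1-2s}\,e^{2\tau\phi}\,|Lu|^2\,dx\,dy
\end{equation*}
valid for $u\in C_c^\infty$ supported near $(x_0,0)$ and for all large $\tau$, with a suitably chosen convex radial weight $\phi$ adapted to the degeneracy at $y=0$; this is the technically substantive part and is precisely the content of the Carleman inequality developed by R\"uland and exploited in \cite{ghosh2016calder}. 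Plugging the reflected $w$ (suitably cut off) into this inequality and letting $\tau\to\infty$ forces $w\equiv0$ in a full $(n+1)$-dimensional neighborhood of $(x_0,0)$.

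Finally, once $w$ vanishes on a nonempty open subset of $\R^{n+1}_+$, classical unique continuation for the uniformly elliptic operator $L$ away from $\{y=0\}$ (the weight $y^{1-2s}$ is smooth and positive there) propagates the vanishing to all of $\R^{n+1}_+$. Taking the trace at $y=0$ yields $v\equiv0$ on $\R^n$, as claimed. The delicate point throughout is handling the degeneracy of the weight at $y=0$; everything else is either standard extension theory or soft propagation of zeros.
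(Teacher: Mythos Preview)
The paper does not supply a proof of this proposition; it simply quotes it as \cite[Theorem~1.2]{ghosh2016calder} in the ``Known results'' subsection. Your outline is essentially the argument of that reference: pass to the Caffarelli--Silvestre extension, use the vanishing Dirichlet and weighted Neumann data on $U\times\{0\}$ to reflect evenly across the boundary, apply R\"uland's Carleman estimate for the degenerate operator $\di(|y|^{1-2s}\nabla\cdot)$ to get vanishing in an $(n{+}1)$-dimensional neighborhood, and propagate via interior unique continuation. So you have reconstructed the proof that the paper merely cites, and the sketch is correct in its broad strokes.

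One cautionary remark: the proposition is stated for $v\in H^r(\R^n)$ with arbitrary $r\in\R$, so the extension and the boundary traces have to be interpreted distributionally rather than via pointwise regularity; the line about $w(\cdot,0)=v$ being ``continuous on $U$'' is not quite right at that level of generality, though it does not affect the argument once everything is read in the weak sense.
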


The second result is the maximum principle for the fractional Laplacian. The proof of the following proposition can be found in \cite{LaiL2020} and \cite{lai2019global}, which extends the result in \cite{ros2015nonlocal} to include a nonzero potential term. 
\begin{prop}[Maximum principle]\label{Prop: strong max principle}
	Let $\Omega\subset \R^n$, $n\geq 1$ be a bounded domain with $C^1$ boundary $\p\Omega$, and $0<s<1$. Suppose that $w(x)\in L^\infty(\Omega)$ be a nonnegative potential. Let $u\in H^s(\R^n)$ be the unique solution of 
	\begin{align*}
	\begin{cases}
	(-\Delta)^s u + w(x)u=F & \text{ in }\Omega, \\
	u=f &\text{ in }\Omega_e.
	\end{cases}
	\end{align*} 
	Suppose that $0\leq F\in L^\infty(\Omega)$ in $\Omega$ and $0\leq f \in L^\infty(\Omega_e)$ with $f\not \equiv 0$ in $\Omega_e$. Then $u>0$ in $\Omega$. 
\end{prop}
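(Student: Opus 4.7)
The strategy is the classical two-step reduction to the potential-free strong maximum principle of \cite{ros2015nonlocal}. First I would establish the weak maximum principle $u \geq 0$ in $\R^n$, then upgrade to strict positivity via an interior pointwise argument at any putative zero of $u$ in $\Omega$. The new feature compared with \cite{ros2015nonlocal} is the lower-order term $w(x)u$, which has a favorable sign in both steps since $w \geq 0$.

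For Step 1, I would use that $f \geq 0$ in $\Omega_e$ to conclude $u^- \in \widetilde{H}^s(\Omega)$, so $-u^-$ is an admissible test function in the weak formulation. Combining the double-integral identity
\begin{align*}
\int_{\R^n} (-\Delta)^{s/2} u \,(-\Delta)^{s/2}(-u^-) \, dx &= \frac{c_{n,s}}{2}\iint_{\R^{2n}}\frac{(u^-(x)-u^-(y))^2}{|x-y|^{n+2s}}\,dx\,dy \\
&\quad - \frac{c_{n,s}}{2}\iint_{\R^{2n}}\frac{(u^+(x)-u^+(y))(u^-(x)-u^-(y))}{|x-y|^{n+2s}}\,dx\,dy,
\end{align*}
with the pointwise inequality $(u^+(x)-u^+(y))(u^-(x)-u^-(y)) \leq 0$ (a case check on signs of $u(x)$ and $u(y)$) and the identity $u^+ u^- \equiv 0$, the weak formulation yields
\begin{align*}
\|(-\Delta)^{s/2} u^-\|_{L^2(\R^n)}^2 + \int_\Omega w (u^-)^2 \, dx \leq -\int_\Omega F u^- \, dx \leq 0,
\end{align*}
which forces $u^- \equiv 0$, i.e., $u \geq 0$ in $\R^n$.

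For Step 2, interior regularity for the fractional Laplacian applied to $(-\Delta)^s u = F - wu \in L^\infty(\Omega)$ yields $u \in C(\Omega)$ (in fact $C^{2s+\varepsilon}_{\mathrm{loc}}(\Omega)$). Suppose for contradiction that $u(x_0) = 0$ for some $x_0 \in \Omega$. Then $x_0$ is a global minimum of $u \geq 0$, so the pointwise formula gives
\begin{align*}
(-\Delta)^s u(x_0) = -c_{n,s} \int_{\R^n} \frac{u(y)}{|x_0 - y|^{n+2s}} \, dy \leq 0,
\end{align*}
while evaluating the equation at $x_0$ gives $(-\Delta)^s u(x_0) = F(x_0) \geq 0$. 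Comparing the two forces the integral to vanish, hence $u \equiv 0$ a.e.\ on $\R^n$, contradicting $f \not\equiv 0$ in $\Omega_e$.

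The main obstacle I foresee is justifying the pointwise evaluation in Step 2 under limited regularity, since the PV integrand can fail to be integrable at $x_0$ based only on $C^s$ Hölder control. Because the integrand is one-signed at the minimum, the only issue is divergence, which is precluded by the interior $C^{2s+\varepsilon}$ estimate. As a cleaner alternative I could bypass the pointwise route entirely by invoking UCP: use continuity of $u$ to locate an open ball $B \subset \Omega$ on which $u \equiv 0$ (should such a ball exist), note that on $B$ one has $(-\Delta)^s u = F \geq 0$ from the equation while simultaneously $(-\Delta)^s u \leq 0$ from the integral representation with $u \geq 0$ and $u|_B = 0$, conclude $(-\Delta)^s u \equiv 0$ on $B$, and apply Proposition \ref{Prop:strong uniqueness} to get $u \equiv 0$ globally, again contradicting $f \not\equiv 0$.
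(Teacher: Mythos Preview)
The paper does not actually prove this proposition; it only states the result and cites \cite{LaiL2020} and \cite{lai2019global} for the proof, noting that it extends \cite{ros2015nonlocal} to include the potential term. Your two-step outline (weak maximum principle via testing against $-u^-$, then strict positivity from the pointwise integral representation at a putative interior zero) is the standard route and is essentially what those references do. Your Step~1 is correct as written, and your main Step~2 argument is correct once you justify $u\in L^\infty(\Omega)$ so that $F-wu\in L^\infty(\Omega)$ and interior $C^{2s+\varepsilon}$ regularity applies; this follows by comparison with the solution $v$ of $(-\Delta)^s v=F$, $v|_{\Omega_e}=f$, since $(-\Delta)^s(v-u)=wu\ge0$ and $v-u=0$ in $\Omega_e$ give $0\le u\le v\in L^\infty$. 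One further technical point: since $F$ and $w$ are only $L^\infty$, ``evaluating the equation at $x_0$'' should be phrased via averages, using continuity of $(-\Delta)^s u$ to write $(-\Delta)^s u(x_0)=\lim_{r\to0}\fint_{B_r(x_0)}(F-wu)\ge0$.

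Your proposed ``cleaner alternative'' via UCP, however, has a genuine gap. Knowing $u(x_0)=0$ at a single point does not, by itself, produce an open ball $B\subset\Omega$ on which $u\equiv0$; the passage from a single zero to an open set of zeros is precisely the content of the strong maximum principle you are trying to prove. Your UCP argument would only show that the \emph{interior} of the zero set $\{u=0\}\cap\Omega$ is empty, which is strictly weaker than $u>0$ in $\Omega$. So the alternative is not a substitute for the pointwise argument; stick with your first approach.
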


\section{Proof of Theorem \ref{Main Thm 1}}\label{Sec 3}
Using the results of Section~\ref{Sec 2}, we proceed to show the main theorem. 
Let $u=u(x;\varepsilon)$ be the solution to the exterior boundary value problem
\begin{align}\label{Dirichlet problem in Section 3}
\begin{cases}
(-\Delta)^s u + q(x,u,\nabla^t u) + a (x, u)=0 & \text{ in }\Omega, \\
u = \eps  f & \text{ in }\Omega_e.
\end{cases}
\end{align} 
Recall that 
$$
    q(x,u,\nabla^t u) = b(x)h(x;u)+\psi(x;d,u),
$$
where $h$ and $\psi$ are defined in \eqref{definition h} and \eqref{definition psi}, respectively.

For notational simplicity, we denote the $k^\text{th}$ derivative of $u$ with respect to $\varepsilon$ by
$$ \p^k_\eps u (x;\eps):=\frac{\p^{k} u}{\p \eps^k}(x;\eps),$$
and at $\eps=0$ we simply denote
$$
u^{(k)}(x) := \p^k_\eps|_{\varepsilon=0} u(x;\varepsilon).
$$	
 
By the UCP, we obtain the following result. 
\begin{prop}\label{prop:induction}
Let $0<t<s<1$ and let $\Omega \subset \R^n$, $n\geq 1$, be a bounded domain with smooth boundary. Let $\eps$ be a small parameter and let $f \in C_c^\infty (W_1)$.  
For $j=1,2$, consider $b_j$, $d_j$, and $a_j$ satisfying \eqref{condition bd} - \eqref{Taylor series a}, and let $u_j$ denote the solution to \eqref{Dirichlet problem in Section 3} with $b$, $d$, and $a$ replaced by $b_j$, $d_j$, and $a_j$, respectively. 

Suppose that 
\begin{align}\label{prop:a12}
\left.\Lambda_1(f)\right|_{W_2} = \left.\Lambda_2(f)\right|_{W_2}\qquad \text{ for any } f\in C^\infty_c(W_1).
\end{align}
Then 
\begin{equation}\label{prop:subinduction}
    u_1^{(k)} = u_2^{(k)} \qquad \text{ in }\R^n \qquad \text{ for all } k\in \N.
\end{equation}
\end{prop}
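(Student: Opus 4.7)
The proof combines smooth dependence of the solution on the parameter $\varepsilon$ with the unique continuation property (UCP) from Proposition~\ref{Prop:strong uniqueness}, and no genuine induction on $k$ is required: the same argument applies to each $k$ in parallel. First I would establish that for each fixed $f \in C_c^\infty(W_1)$, the solution map $\varepsilon \mapsto u_j(\,\cdot\,;\varepsilon)$ is real-analytic from a neighborhood of $0 \in \R$ into $C^s(\R^n) \cap H^s(\R^n)$, so that the derivatives $u_j^{(k)} = \partial_\varepsilon^k|_{\varepsilon=0} u_j$ are well-defined for every $k \in \N$. This can be obtained by applying the analytic implicit function theorem to the fixed-point equation $v = \mathcal{F}(v)$ used in the proof of Theorem~\ref{Thm:well posedness}: the nonlinearity depends analytically on $v$ (since $h$ and $\psi$ are polynomial in $u$ while $a$ is holomorphic in its second argument) and on $\varepsilon$ through the linear boundary data $u_0 = \varepsilon u_0^*$, and its Fr\'echet derivative in $v$ at $(v,\varepsilon) = (0,0)$ is the identity because $h$, $\psi$, and $a$ all vanish to order at least two at $u = 0$. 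As a byproduct, $u_j(\,\cdot\,;0) \equiv 0$.

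With smoothness in $\varepsilon$ in hand, I would differentiate the exterior boundary condition $u_j(\,\cdot\,;\varepsilon)|_{\Omega_e} = \varepsilon f$ at $\varepsilon = 0$ to obtain $u_j^{(1)}|_{\Omega_e} = f$ and $u_j^{(k)}|_{\Omega_e} = 0$ for $k \geq 2$; in either case $u_1^{(k)} - u_2^{(k)} = 0$ on $\Omega_e$, and in particular on $W_2 \subset \Omega_e$. For the interior information, Proposition~\ref{prop:DNmap} recasts hypothesis \eqref{prop:a12}, applied to $\varepsilon f$ with $\varepsilon$ small, as the identity
$$
\left.(-\Delta)^s u_1(\,\cdot\,;\varepsilon)\right|_{W_2} = \left.(-\Delta)^s u_2(\,\cdot\,;\varepsilon)\right|_{W_2}.
$$
Since $(-\Delta)^s$ is linear and commutes with $\partial_\varepsilon$, differentiating $k$ times at $\varepsilon = 0$ gives $(-\Delta)^s u_1^{(k)} = (-\Delta)^s u_2^{(k)}$ in $W_2$. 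Setting $v := u_1^{(k)} - u_2^{(k)} \in H^s(\R^n)$, both $v = 0$ and $(-\Delta)^s v = 0$ hold on the open set $W_2$, so the UCP (Proposition~\ref{Prop:strong uniqueness}) forces $v \equiv 0$ on $\R^n$, which is \eqref{prop:subinduction}.

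The principal obstacle is the first step, justifying the analytic dependence on $\varepsilon$: one must check that the contraction argument of Theorem~\ref{Thm:well posedness} upgrades to a smoothly parameterized version, so that all $\varepsilon$-derivatives of $u_j$ exist in a function space on which $(-\Delta)^s$ acts and to which the UCP applies. Once that technical point is secured, the rest of the proof is a clean differentiation of the DN-map identity followed by a single application of UCP, and it proceeds identically for every $k$.
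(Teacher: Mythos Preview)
Your argument is correct and close in spirit to the paper's, but packaged more uniformly. The paper proceeds order by order: at $k=1$ it notes that both $u_j^{(1)}$ satisfy the \emph{same} linear problem $(-\Delta)^s u^{(1)}=0$ in $\Omega$ with $u^{(1)}=f$ in $\Omega_e$, and invokes uniqueness for that problem (not UCP) to conclude $u_1^{(1)}=u_2^{(1)}$; for each $k\ge 2$ it writes out the explicit $k$th linearized equation, reads off $(-\Delta)^s u_1^{(k)}=(-\Delta)^s u_2^{(k)}$ on $W_2$ from the DN-map hypothesis, and applies UCP exactly as you do. Your version sidesteps the explicit linearized PDEs entirely and handles every $k$ (including $k=1$) by the single mechanism ``differentiate the DN identity, apply UCP,'' which is more economical for this proposition in isolation. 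The paper's route has the compensating advantage that the displayed linearized equations \eqref{equation u2}, \eqref{equation u3}, \eqref{equ 7a1 in 1st example} are precisely the ingredients needed in the proof of Theorem~\ref{Main Thm 1}, so deriving them here is not wasted effort. One further remark: you correctly flag analytic (or at least $C^\infty$) dependence on $\varepsilon$ as the technical prerequisite; the paper simply differentiates without comment, so your sketch via the analytic implicit function theorem in fact addresses a point the paper leaves implicit.
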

\begin{proof}
For clarity, we present the proof in the case $m=2$ in the nonlinear terms $\psi$ and $a$. The proof for more general $m>2$ follows a similar outline. \\
Fixing arbitrary positive integer $N$, it is sufficient to show that $u_1^{(k)} = u_2^{(k)}$ in $\R^n$ for all $1\leq k\leq N$. 

We first apply the operator $\p_\varepsilon|_{\varepsilon=0}$ to \eqref{Dirichlet problem in Section 3}. Using that $u(x;0)=0$ by well-posedness of \eqref{Dirichlet problem in Section 3}, we obtain
\begin{align}\label{Prop:equation of 1st linearization in thm 1}
\begin{cases}
(-\Delta)^s u_j^{(1)} =0 & \text{ in }\Omega,\\
u_j ^{(1)} =f &\text{ in }\Omega_e.
\end{cases}
\end{align}
Since $u_1^{(1)}=u_2^{(1)}=f $ in $\Omega_e$, the well-posedness of the problem (Theorem~\ref{Thm:well posedness}) implies that
\begin{align}\label{Prop:v_1 =v_2a1 Rn 1}
u_1^{(1)}=u_2^{(1)} =: u^{(1)} \qquad\text{ in } \R^n.
\end{align}

Next we apply $\p^2_\varepsilon|_{\varepsilon=0}$ to \eqref{Dirichlet problem in Section 3} to obtain  
\begin{align}\label{equation u2}
\begin{cases}
	(-\Delta)^s u_j^{(2)} + b_j(x)h(x;u^{(1)}) =0& \text{ in }\Omega,\\
	u_j ^{(2)} =0 &\text{ in }\Omega_e.
\end{cases}
\end{align} 
Since \eqref{prop:a12} holds, we have $(-\Delta)^s u_1^{(2)}=(-\Delta)^s u_2^{(2)}$ in $W_2$. Combining this fact with $u_1^{(2)} =u_2^{(2)}=0$ in $\Omega_e$, the UCP implies that 
\begin{align}\label{Prop:v_1 =v_2a1 Rn 2}
u_1^{(2)}=u_2^{(2)} \qquad\text{ in } \R^n.
\end{align}

Recalling that we have set $m=2$ in $\psi$, we next apply $\p^3_\varepsilon|_{\varepsilon=0}$ to \eqref{Dirichlet problem in Section 3} to obtain
\begin{align}\label{equation u3}
\begin{cases}
	(-\Delta)^s u_j^{(3)} + 2b_j(x)h(x;u^{(1)}, u^{(2)}) + 2 \psi(x;d_j,u^{(1)}) +\p^3_{z}a_j(x,0)\left(u^{(1)}\right)^3=0& \text{ in }\Omega,\\
	u_j ^{(3)} =0 &\text{ in }\Omega_e.
\end{cases}
\end{align} 
Again, since $(-\Delta)^s u_1^{(3)}=(-\Delta)^s u_2^{(3)}$ in $W_2$ by \eqref{prop:a12} and $u_1^{(3)}=u_2^{(3)}=0$ in $\Omega_e$, by the UCP we have
\begin{align*}
u_1^{(3)}=u_2^{(3)} \qquad\text{ in } \R^n.
\end{align*}

Following similar steps to above, for any $N>3$, we perform $\p^N_\varepsilon|_{\varepsilon=0}$ on \eqref{Dirichlet problem in Section 3}, which gives 
	\begin{align}\label{equ 7a1 in 1st example}
	\begin{split}
 	(-\Delta)^s u_j^{(N)} + R_{N-1}(u_j,a_j,b_j,d_j) + \p_z^{N} a_j(x,0)\left( u_j^{(1)}\right)^N=0 \qquad\text{ in }\Omega, 
	\end{split}
	\end{align}
with boundary data 
\begin{align}\label{bdryK}
u_1^{(N)}=u_2^{(N)}=0 \qquad  \hbox{ in }\Omega_e.
\end{align}
Here $R_{N-1}(u_j,a_j, b_j, d_j)$ stands for a polynomial consisting of the functions $b_j(x)$, $d_j(x,y)$, and $\p_z^{\beta}a_j(x,0)$ for $3\leq\beta\leq N-1$ and $u_j^{(k)}(x)$ for all $1\leq k\leq N-1$. Similarly, since $(-\Delta)^s u_1^{(N)}=(-\Delta)^s u_2^{(N)}$ in $W_2$ and \eqref{bdryK} hold, the UCP yields that $u_1^{(N)} = u_2^{(N)} $ in $\R^n$. The proof is complete. 
\end{proof}

With Proposition~\ref{prop:induction}, we are now ready to show the main result. The outline of the proof of Theorem~\ref{Main Thm 1} is as follows. We will first show that $b_1=b_2$ and then $\p^3_{z}a_1(x,0) = \p^3_{z}a_2(x,0)$. Using these equalities, we can show $d_1\cdot(x-y)=d_2\cdot(x-y)$. Finally, to fully recover $a$, we rely on an induction argument.  

\vskip.2cm

\begin{proof}[Proof of Theorem~\ref{Main Thm 1}]
We again present the proof for the case $m=2$ in the nonlinear terms $\psi$ and $a$. For more general $m>2$, the proof can be shown in a similar manner. 
	
 The proof is completed in 3 steps.

\noindent\textbf{Step 1. Recover $b$.} 
Let $\eps$ be sufficiently small and let $f\in C_c^\infty (W_1)$ be a non-constant function.  
For $j=1,2$, let $u_j$ be the solution to the following exterior boundary value problem:
\begin{align}\label{Dirichlet problem in Section 3_1}
\begin{cases}
(-\Delta)^s u_j + b_j(x)h(x;u_j) + \psi(x;d_j,u_j)+ a_j (x, u_j)=0 & \text{ in }\Omega, \\
u_j =\eps  f  & \text{ in }\Omega_e.
\end{cases}
\end{align}
Since \eqref{DN map in Thm 1} holds, by Proposition~\ref{prop:induction}, we have
 \begin{align}\label{thm:subinduction}
u^{(k)}:= u_1^{(k)} = u_2^{(k)} \text{ in }\R^n,\qquad k\geq 1.
\end{align}

Recall that $u_j^{(2)}$ satisfies \eqref{equation u2} for $j=1,2$. Since $u_1^{(2)}=u_2^{(2)}$, we have
\begin{align}\label{b identity}
    (b_1-b_2)(x) h(x;u^{(1)})=0\qquad\hbox{ in }\Omega,
\end{align}
where $u^{(1)}$ is the solution to \eqref{Prop:equation of 1st linearization in thm 1} with $u^{(1)}|_{\Omega_e}=f$, non-constant.
Note that by the definition of $h$, 
$$
h(x;u^{(1)}) ={c_{n,t}\over 2}\int_{\R^n}{|u^{(1)}(x)-u^{(1)}(y)|^2\over|x-y|^{n+2t}}\,dy\geq 0\qquad \hbox{ for all }x\in\Omega.
$$
We will show that in fact $h>0$ in $\Omega$. By contradiction, suppose that $h(x_0;u^{(1)})=0$ for some point $x_0\in\Omega$. This implies that $u^{(1)}\equiv u^{(1)}(x_0)$ in $\R^n$, which contradicts that the chosen exterior data $f$ is not a constant function. Therefore $h(x;u^{(1)})\neq 0$ for every point $x$ in $\Omega$.
Thus \eqref{b identity} implies that 
$$
    b_1=b_2\qquad \hbox{ in }\Omega.
$$

\noindent\textbf{Step 2. Recover $d$ and $\p^3_za(x,0)$.}
We will use that $b:=b_1=b_2$. 

In this step, we also let $\eps$ be sufficiently small and $f$ be any function in $C_c^\infty (W_1)$. 
For $j=1,2$, we also let $u_j$ be the solution to the following exterior boundary value problem:
\begin{align}\label{Dirichlet problem in Section 3_2}
\begin{cases}
(-\Delta)^s u_j + b(x)h(x;u_j) + \psi(x;d_j,u_j)+ a_j (x, u_j)=0 & \text{ in }\Omega, \\
u_j =\eps  f  & \text{ in }\Omega_e.
\end{cases}
\end{align}

Recalling \eqref{prop:subinduction}, based on \eqref{equation u3} again, the third-order linearization of \eqref{Dirichlet problem in Section 3_2} then gives 
\begin{align}\label{3 linearization}
\begin{cases}
    (-\Delta)^s u^{(3)}  + 2b(x) h(x;u^{(1)}, u^{(2)})+ 2 \psi(x;d_j,u^{(1)}) +\p^3_{z}a_j(x,0)\left(u^{(1)}\right)^3 = 0 & \text{ in }\Omega,\\
    u^{(3)} = 0 &\text{ in }\Omega_e.
\end{cases}
\end{align}
Subtracting \eqref{3 linearization} with $j=2$ from \eqref{3 linearization} with $j=1$, we obtain
\begin{align}\label{id da}
(u^{(1)})^2(x)\left(2\int_{\R^n} (d_1-d_2)(x,y) \cdot\nabla^t u^{(1)}(x,y) \, dy + (\p^3_{z}a_1(x,0) - \p^3_{z}a_2(x,0)) u^{(1)}(x) \right) = 0.
\end{align} 
Here $u^{(1)}$ is the solution to \eqref{Prop:equation of 1st linearization in thm 1} with $u^{(1)}|_{\Omega_e}=f$ for any $f\in C^\infty_c(W_1)$.
By the Runge approximation property (see \cite[Lemma~5.1]{ghosh2016calder} with $q=0$), we may find a sequence of solutions $v_k$ to \eqref{Prop:equation of 1st linearization in thm 1} 
such that $v_k\to 1$ in $L^2(\Omega)$ as $k\rightarrow \infty$. Then there is a subsequence $v_{k_j}$, which converges pointwise almost everywhere (a.e.) to $1$ as $j\rightarrow \infty$. 
Note, then, that since we assume $(d_1-d_2)(x,y)|x-y |^{-n/2-t}\in L^2(\Omega)$ for any fixed $x\in\Omega$, we have that $\int_{\R^n} (d_1-d_2)(x,y) \cdot\nabla^t v_{k_j}(x,y) \, dy \to 0$ as $j\to \infty$. 
Replacing $u^{(1)}$ by $v_{k_j}$ in \eqref{id da} and taking $j\to \infty$, the first term thus vanishes, yielding 
\begin{align*} 
     \p^3_{z}a_1(x,0) = \p^3_{z}a_2(x,0).
\end{align*}

With this, we now turn back to \eqref{id da} and get that
 \begin{align}\label{recover da 1}
  (u^{(1)})^2(x) \int_{\R^n} (d_1-d_2)(x,y) \cdot (y-x) {u^{(1)}(x)-u^{(1)}(y)\over |x-y|^{n/2+t+1}} \, dy  =0.
 \end{align} 
For any fixed $x_0\in\Omega$, since $(d_1-d_2)(x_0,y) \cdot (y-x_0)$ is continuous in $\Omega$, we may define the following two open subsets of $\Omega$:
$$
    A_+ := \{ y\in\Omega\setminus\{x_0\}:\,  (d_1-d_2)(x_0,y) \cdot (y-x_0)> 0 \}
$$
and 
$$
    A_- := \{ y\in\Omega\setminus\{x_0\}:\,  (d_1-d_2)(x_0,y) \cdot (y-x_0) <0 \}.
$$
We will show by contradiction that $(d_1-d_2)(x,y) \cdot (y-x) = 0$. Suppose that at least one of $A_\pm$ is not empty.

We define the function $\varphi_{x_0}$ by
$$
\varphi_{x_0}(y) = \begin{cases}
{1\over 1+|x_0-y|^2} & \hbox{ if } y\in A_+, \\
{1+2|x_0-y|^2\over 1+|x_0-y|^2} &\hbox{ if } y\in A_-,\\
1 & \hbox{ if }y\in\Omega\setminus(A_+\cup A_-).
\end{cases}
$$
Since $\Omega$ is bounded, $\varphi_{x_0}$ is in $L^2(\Omega)$. It is clear that $\varphi_{x_0}(x_0)=1$ since $x_0\notin A_\pm$. Then we have
\begin{align*} 
\left\{
\begin{array}{cc}
    \varphi_{x_0}(x_0)=1 > \varphi_{x_0}(y) & \hbox{ for all }y\in A_+,\\
    \varphi_{x_0}(x_0)=1 < \varphi_{x_0}(y) & \hbox{ for all }y\in A_-,\\
\end{array} 
\right.
\end{align*} 
and thus
\begin{align}\label{function varphi}
    (d_1-d_2)(x_0,y) \cdot (y-x_0)  {\varphi_{x_0}(x_0) - \varphi_{x_0}(y) \over |x_0-y|^{n/2+t+1}} 
    > 0\qquad \hbox{ for all }y\in A_\pm.
\end{align} 

Again by the Runge approximation property, there exists a sequence of solutions $\tilde v_k$ to \eqref{Prop:equation of 1st linearization in thm 1} 
such that $\tilde v_k \to \varphi_{x_0}$ in $L^2(\Omega)$ as $k\rightarrow \infty$, which implies that there exists a subsequence $\tilde v_{k_j}\to \varphi_{x_0}$ a.e. as $j\rightarrow \infty$. Since  $(d_1-d_2)(x,y)|x-y |^{-n/2-t}\in L^2(\Omega)$ for any fixed $x\in\Omega$, we may replace $u^{(1)}$ by $\tilde v_{k_j}$ in \eqref{recover da 1} and take $j\rightarrow \infty$ to obtain
 \begin{align}\label{recover da 2}
\varphi_{x_0}^2(x_0) \int_{\R^n} (d_1-d_2)(x_0,y) \cdot (y-x_0)  {\varphi_{x_0}(x_0) - \varphi_{x_0}(y) \over |x_0-y|^{n/2+t+1}}\, dy  =0. 
\end{align} 
However, since $0\neq \varphi_{x_0}(x_0)$, by \eqref{condition bd} and \eqref{function varphi}, the integral in \eqref{recover da 2} must be strictly positive for any nonempty $A_\pm$, which is a contradiction. Therefore, both $A_\pm$ must be empty sets, which implies that 
$$
d_1(x_0,y)\cdot(x_0-y)=d_2(x_0,y)\cdot(x_0-y) \qquad \hbox{ for all }y\in \Omega.
$$
Since $x_0\in\Omega$ is arbitrary, we thus have
$$
d_1(x,y)\cdot(x-y)=d_2(x,y)\cdot(x-y) \qquad\hbox{for each $(x,y)\in\Omega\times \Omega$}.
$$  
Thus we uniquely determine the $(x-y)$-direction component of $d(x,y)$.  
 
Now the problem boils down to showing the uniqueness of the potential $a$. It is then sufficient to show that $\p_z^k a_1(x,0)=\p_z^ka_2(x,0)$ for $k>3$.\\

\noindent\textbf{Step 3. Recover higher order terms $\p^k_z a(x,0)$, $k>3$.}
Step 1 and Step 2 have shown that 
\begin{align}\label{thm 3 unique}
 b_1=b_2,\ \psi(x;d_1,u^{(1)}) = \psi(x;d_2,u^{(1)}),\ \p_z^3 a_1(x,0)=\p_z^3a_2(x,0). 
\end{align}
By induction, for any fixed $N\in\mathbb{N}$, suppose that 
\begin{align}\label{thm 3 unique 1}
\p_z^j a_1(x,0)=\p_z^ja_2(x,0) \qquad \hbox{ for }3 \leq j \leq N-1.
\end{align}
It is sufficient to show that
$\p_z^{N}a_1(x,0)=\p_z^{N}a_2(x,0)$ holds as well. From now on, we will use $j$ subscripts on $a_j$ only since the coefficients $b$, $d$ have been recovered. 

Recall from \eqref{equ 7a1 in 1st example} that  
\begin{align}\label{equ 7a1 in 1st example 1}
\begin{split}
(-\Delta)^s u^{(N)} + R_{N-1}(u ,a_j,b ,d ) + \p_z^{N} a_j(x,0)\left( u ^{(1)}\right)^N=0 \qquad\text{ in }\Omega 
\end{split}
\end{align}
with boundary data 
\begin{align}\label{bdryK 1}
u^{(N)}=u_1^{(N)}=u_2^{(N)}=0 \qquad  \hbox{ in }\Omega_e,
\end{align}
where $R_{N-1}(u ,a_j, b , d )$ stands for a polynomial consisting of the functions $b (x)$, $d (x,y)$, and $\p_z^{\beta}a_j(x,0)$ for $3\leq\beta\leq N-1$ and $u ^{(k)}(x)$ for all $1\leq k\leq N-1$.
Note that \eqref{thm 3 unique 1} implies that 
$$R_{N-1}(u ,a_1, b , d )= R_{N-1}(u ,a_2, b , d ),$$
and therefore \eqref{equ 7a1 in 1st example 1} gives
$$
\p_z^{N} a_1(x,0)\left( u ^{(1)}\right)^N=\p_z^{N} a_2(x,0)\left( u^{(1)}\right)^N.
$$
Choosing exterior data $f>0$ in \eqref{Prop:equation of 1st linearization in thm 1} and using the maximum principle (Proposition~\ref{Prop: strong max principle}), we have $u^{(1)}\neq 0$. This gives $\p_z^{N} a_1(x,0)=\p_z^{N} a_2(x,0)$. Finally, by the uniqueness of the expansion \eqref{Taylor series a}, we obtain $a_1(x,z)=a_2(x,z)$. The proof is complete.	
\end{proof}

\bigskip
\bigskip
\noindent\textbf{Acknowledgment.}
R.-Y. Lai was partially supported by NSF grant DMS-1714490 and DMS-2006731.
L. Ohm was supported by NSF grant DMS-1714490 during summer 2020 and NSF grant DMS-2001959.

\bigskip
\bibliographystyle{abbrv}
\bibliography{NonlinearNonlocal}
\end{document}